\newtheorem{theorem}{Theorem}
\newtheorem{corollary}[theorem]{Corollary}
\newtheorem{example}[theorem]{Example}
\newtheorem{examples}[theorem]{Examples}
\newtheorem{lemma}[theorem]{Lemma}
\newtheorem{notation}[theorem]{Notation}
\newtheorem{proposition}[theorem]{Proposition}
\newtheorem{remark}[theorem]{Remark}
\newtheorem{fact}[theorem]{Fact}
\newenvironment{proof}[1][Proof]{\noindent\textbf{#1.} }{\ \rule{0.5em}{0.5em}}
\def\title#1{{\Large\bf  \begin{center} #1 \vspace{0pt} \end{center}  } }
\def\authors#1{{\large\bf \begin{center} #1 \vspace{0pt} \end{center} } }
\def\university#1{{\sl \begin{center} #1 \vspace{0pt} \end{center} } }
\def\inst#1{\unskip $^{#1}$}
\begin{document}

\title{Leavitt Path Algebras with Coefficients  in a Commutative Unital Ring}
\authors{Ayten Ko\c{c}\inst{1} 
\quad 
		Murad \"{O}zayd\i n\inst{2}  
	}
	\smallskip
	
	%
	%

			\university{\inst{1}Gebze Technical  University, Türkiye         }
	\university{\inst{2}University of Oklahoma, USA}

\date{August 2023}

\begin{abstract}
 
  In addition to extending some facts from field coefficients to commutative ring coefficients for Leavitt path algebras with new shorter proofs, we also  prove some results that are new even for field coefficients. In particular, we show that the ideal lattice of a Leavitt path algebra embeds into the ideal lattice of the path algebra of the same digraph, we construct a new basis for a Leavitt path algebra of polynomial growth and 
  give a formula for the Gelfand-Kirillov dimension of a Leavitt path algebra in terms of its digraph. 
\end{abstract}

{\bf Keywords:} Leavitt path algebras, quiver representations, Morita equivalence, Gelfand-Kirillov dimension \\
\medskip

{\bf MSC2020:}
 16S88 

 \medskip
 {\bf Secondary:}     16G20, 
16D90, 16P90
 
\medskip
\section{Introduction}
\medskip

The Leavitt path algebra $L(\Gamma)$ of a di(rected )graph $\Gamma$ was defined (many decades after Leavitt's seminal work  \cite{lea65}, via a detour through functional analysis) by Abrams, Aranda Pino \cite{aa05} and by Ara, Moreno, Pardo \cite{amp07} (independently and essentially simultaneously) as an algebraic analog of a graph $C^*$-algebra. In addition to the algebras $L(1,n)$ of Leavitt \cite{lea65}  these include (sums of) matrix algebras (over fields or Laurent polynomial algebras), algebraic quantum discs and spheres, and many others. The important subclass of Leavitt path algebras of polynomial growth are identified as coming from finite digraphs whose cycles are pairwise disjoint and then studied by Alahmedi, Alsulami, Jain, Zelmanov \cite{aajz12}, \cite{aajz13}.

\medskip
\medskip

Initially Leavitt Path Algebras (LPAs) were defined with  field coefficients, this was extended to a commutative ring with 1 in \cite{t11}, see also \cite{L15}. (We will denote a field by $\mathbb{F}$ and a commutative ring with 1 by ${\bf k}$.) Most of the results in this note are for a Leavitt path algebra of a row-finite digraph with coefficients in ${\bf k}$, a commutative ring with 1, defined below. Many of the facts and lemmas needed exist in the literature only for field coefficients and some only for finite digraphs. We provide new, shorter proofs of these basic facts (and sometimes their generalizations) when the coefficients are in ${\bf k}$.

\medskip
\medskip

Some of the generalizations we prove are fairly routine modifications of existing versions. For instance, in \cite{ko1} we showed that the category of (unital) $L_{\mathbb{F}}(\Gamma)$-modules  is equivalent to a full subcategory of quiver representations satisfying a natural isomorphism condition when $\Gamma$ is a  row-finite digraph.  In Section \ref{mod} below we show that the same result is valid when the coefficients are a commutative ring ${\bf k}$ with 1. Similarly, the explicit Morita equivalence given by an effective combinatorial (reduction) algorithm on a  digraph  $\Gamma$ originally given in \cite{ko2} (to classify all finite dimensional modules of a Leavitt path algebra)  also generalizes from $\mathbb{F}$ to ${\bf k}$ as explained below in Section \ref{RA}.

\medskip
\medskip

Section \ref{RA} starts with the proof of the fact that restriction gives an embedding of the ideal lattice of the Leavitt path algebra $L_{\bf k}(\Gamma)$ into the ideal lattice of the path algebra $\bf k\Gamma$, a new result for field coefficients also. While this fact should be of independent interest, here we use it to show that certain homomorphisms from $L_{\bf k}(\Gamma)$ are one-to-one. Previously a similar technique, namely the Graded Uniqueness Theorem \cite[Theorem 2.2.15]{aam17} for field coefficients, \cite[Theorem 5.3]{t11} for commutative ring coefficients was utilized, which could only be used for graded homomorphisms. The rest of Section \ref{RA} is about extending the Reduction Algorithm of \cite{ko2}, giving a Morita equivalence between $L_{\bf k}(\Gamma)$ and a generalized corner subalgebra (which is isomorphic to a Leavitt path algebra of another digraph with less vertices than $\Gamma$).

\medskip
\medskip

In Section \ref{GK} we focus on a finite digraph $\Gamma$ whose cycles are pairwise disjoint (a necessary and sufficient condition for $L_{\mathbb{F}}(\Gamma)$ to have polynomial growth \cite{aajz12}). We define a convenient ${\bf k}$-basis for $L_{\bf k}(\Gamma)$ in Theorem \ref{basis}, different from those in \cite{aajz12} and \cite{ag12}, in particular its proof is independent of any version of the Dimand Lemma. Our basis is useful for giving a combinatorial formula of the Gelfand-Kirillov dimension (Theorem \ref{height}, Theorem \ref{k-height}). This is achieved by defining a height function on the poset of sinks and cycles in $\Gamma$. We compute the Gelfand-Kirillov dimension of $L_{\mathbb{F}}(\Gamma)$ in terms of this height function. Expressing the exact Gelfand-Kirillov dimension purely combinatorially is a new result for field coefficients as well. We also give two generalizations, first when ${\bf k}$ is an $\mathbb{F}$-algebra, second when ${\bf k}$ is an integral domain (using the definition given in \cite{Bell}).

\section{Preliminaries}

\subsection{Digraphs}
\medskip 
A {\em di}({\em rected} ){\em graph} $\Gamma$ is a four-tuple $(V,E,s,t)$ where $V$ is the set of vertices, $E$ is the set of arrows (directed edges), $s$ and $t:E \longrightarrow V$ are the source and  the target functions.

\begin{remark}
A digraph is also called an ”oriented graph” in graph theory, a
”diagram” in topology and category theory, a ”quiver” in representation theory,
usually just a ”graph” in graph $C^*$-algebras and Leavitt path algebras. The notation $\Gamma =(V,E,s,t)$  for a digraph is fairly standard in graph theory, except "$A$" for arrows (or arcs) is also  used instead of "$E$" for edges. In quiver representations $Q = (Q_0, Q_1, s, t)$ is
more common, however "$h$" (denoting head) and "$t$" (denoting tail) and are also used instead of "$t$" and "$s$", respectively. 
In graph $C^*$-algebras $E = (E^0, E^1, s, r)$ is  used. We prefer the graph theory
notation which involves two more letters but no subscripts or superscripts. As
in quiver representations we view $\Gamma$ as a small category, so ”arrow” is preferable
to ”edge”, similarly for ”target” versus ”range” (also range usually means image rather than target).
\end{remark}

The digraph $\Gamma$ is \textit{finite} if $E$ and $V$ are both finite. $\Gamma$ is \textit{row-finite} if $s^{-1}(v)$ is finite for all $v$ in $V$. If $s^{-1}(v)= \emptyset$ then  the vertex $v$ is a \textit{sink}.
If $t(e)=s(e)$ then $e$ is  a \textit{loop}. 
 If $W\subseteq V$ then $\Gamma_{W}$ denotes the \textit{full subgraph} on $W$, that is, $\Gamma_W=(W, s^{-1}(W) \cap t^{-1}(W), s\vert_{_W}, t\vert_{_W})$.

\medskip
\medskip 

A \textit{path} $p$ of length $n>0$ is a sequence of arrows $e_{1}\ldots e_{n}$ such
that $t(e_{i})=s(e_{i+1})$ for $i=1,\ldots ,n-1$. The source of $p$ is $s(p):=s(e_{1})$  and the target of $p$ is $t(p ):=t(e_{n})$.  A path of length 0 consists of a single vertex $v$ where $s(v):=v $ and $t(v) := v$. We will denote the length of $p$ by $l(p)$. An \textit{infinite path} $p$ is an infinite sequence of arrows $e_1e_2 \cdots e_k \cdots $
 with $t(e_i)=s(e_{i+1})$ for each $i$; now $s(p)=s(e_1)$ but $t(p)$ is not defined. A path $C=e_1e_2 \cdots e_n$ with $n>0$ is a \textit{cycle} if $s(C )=t(C )$ and $s(e_{i})\neq s(e_{j})$ for $i\neq j$. We consider the cycles $e_1e_2 \cdots e_n$ and $e_2e_3 \cdots e_ne_1$ equivalent.
 An arrow  $e \in E$ is an \textit{exit} of the cycle $C=e_1e_2 \cdots e_n$ if there is an $i$ such that $ s(e)=s(e_i)$ but $e\neq e_i$. 
 The digraph $\Gamma$ is \textit{acyclic} if it has no cycles. 
  $Path(\Gamma)$ denotes the set of paths in $\Gamma$.

\medskip
\medskip

There is a preorder $\leadsto$ on the set of vertices $V$ of $\Gamma$: $u \leadsto v $ if and only if   there is a path from $ u$ to   $ v$. This preorder defines an equivalence relation on $V$: $u\sim v$ if and only if $u\leadsto v$ and $v\leadsto u$. There is an induced partial order on equivalence classes, which we will also denote by $[u] \leadsto [v]$. When the cycles of $\Gamma$ are pairwise disjoint, the preorder $\leadsto$ defines a partial order on the set of sinks and cycles in $\Gamma$.

\medskip
\medskip

The set of \textbf{predecessors} of $v$ in $V$ is $V_{\leadsto v} := \{ u \in V \mid u \leadsto v \} $. (This set has also been denoted by $M(v)$ in the literature.) In particular, every vertex $v$ is a predecessor of itself since there is a path of length zero from $v$ to $v$. If $u$ and $v$ are two vertices on a cycle $C$ then $V_{\leadsto u}=V_{\leadsto v}$,  so $V_{\leadsto C} :=V_{\leadsto u}$ is well-defined. Let $\Gamma_{\leadsto v}$ and $\Gamma_{\leadsto C}$ be the full subgraphs on $V_{\leadsto v}$ and $V_{\leadsto C}$, respectively. The set of {\bf successors} of $v$ in $V$ is $V_{v \> \leadsto } := \{ w \in V \mid v \leadsto w \} $ (this set has been denoted by $T(v)$ in the literature) and  $V_{C\> \leadsto}$ is defined analogously.
Similarly,  $V_{\leadsto X}:=\{ v \in V \> \vert \> v \leadsto x \textit{ for some } x\in X\}$ and 
$V_{X \> \leadsto} :=\{ v\in V \>\vert \> x \leadsto v \textit{ for some } x \in X\}$
when $X \subseteq V$.

\subsection{Leavitt Path Algebras}

Given a digraph $\Gamma,$ the \textit{extended (or doubled) digraph} of $\Gamma$ is $\tilde{\Gamma} := (V,E \sqcup E^*, s~,t~)$ where $E^* :=\{e^*~|~ e\in E \}$,  the functions $s$ and $t$ are  extended as $s(e^{\ast}):=t(e)$ and $t(e^{\ast }):=s(e) $ for all $e \in E$. Thus the dual arrow $e^*$ has the opposite orientation of $e$. We extend $*$ to an operator defined on all paths of $\tilde{\Gamma}$: Let $v^*:=v$ for all $v$ in $V$, $(e^*)^*:=e $ for all $e$ in $E$ and $p^*:= e_n^* \ldots e_1^*$ for a path $p=e_1 \ldots e_n$ with $e_1, \ldots , e_n $ in $E \sqcup E^* $. In particular $*$ is an involution, that is, $**=id$.

\medskip
\medskip

The \textbf{ Leavitt path algebra} of a digraph $\Gamma$ with coefficients in a commutative ring ${\bf k}$ with 1, as defined in \cite{aa05}, \cite{amp07} and \cite{t11} is the ${\bf k}$-algebra $L_{\bf k}(\Gamma)$ generated by $V \sqcup E \sqcup E^*$ satisfying:\\
\indent(V)  $\quad \quad vw ~=~ \delta_{v,w}v$ for all $v, w \in V ,$ \\
\indent($E$)  $\quad \quad s ( e ) e  = e=e t( e)  $ for all $e \in E\sqcup E^*$, \\
\indent(CK1) $\quad e^*f ~ = ~ \delta_{e,f} ~t(e)$ for all $e,f\in E$, \\
\indent(CK2) $\quad v~  = ~ \sum_{s(e)=v} ee^*$  for all $v$ with $0< \vert s^{-1}(v) \vert < \infty $\\
\noindent
where $\delta_{i,j}$ is the Kronecker delta.
$L_{\bf k}(\Gamma)$ is a $*$-algebra since these relations are compatible with the involution $*$ which defines an anti-automorphism on $L_{\bf k}(\Gamma)$: for all $a$, $b$ in $L_{\bf k}(\Gamma)$ we have $(ab)^*=b^*a^*$. 

\medskip
\medskip
\noindent
{\bf Examples:} \label{example}

$ \bullet_v \qquad \qquad \bullet_u {\stackrel{e}{\longrightarrow}} \bullet_v \quad  \quad \qquad  \xymatrix{{\bullet}^{v} \ar@(ul,ur) ^{e}}\quad \quad \xymatrix{  &\> \> { \bullet}_{v} \ar@(ul,ur) ^{e}} \xymatrix{ \stackrel{f}{\longrightarrow} \>   \bullet_{w} }\xymatrix{ & {\bullet^v} 
 \ar@(ur,dr)^{e_1};
 \ar@(u,r)^{e_n};
\ar@(ul,ur)^{e_{n-1}};  
\ar@(dr,dl)^{}; 
\ar@(r,d)^{};  
\ar@{.>}@(dr,dl); 
\ar@{.>}@(d,l);
\ar@{.>}@(dl,ul);
\ar@{.>}@(l,u);
}$ 
$$\Gamma_1 \qquad \quad \quad \quad \Gamma_2 \quad \qquad \qquad  \quad \Gamma_3 \qquad \qquad  \quad \quad \Gamma_4 \qquad\qquad \qquad \quad  R_n  \quad $$

\medskip
\medskip

 (i) $ {\bf k}\Gamma_1={\bf k}v =L_{\bf k}(\Gamma_1)$.

\medskip
\medskip

(ii)  $L_{\bf k}(\Gamma_2) \cong M_2({\bf k})$ where $u \leftrightarrow E_{11}\> , \quad v \leftrightarrow E_{22} \> , \quad e \leftrightarrow E_{12} \> , \quad e^* \leftrightarrow E_{21}$.\\
More generally, if  $\Gamma$ is finite and has no cycles then $L_{\bf k}(\Gamma)$ is isomorphic to a direct sum of matrix algebras $M_n({\bf k})$, each summand corresponds to a sink $w$ and $n=\vert P_w \vert $, the number of paths ending at $w$ as shown in Proposition \ref{aas} below.

\medskip
\medskip

(iii) ${\bf k}\Gamma_3 \cong {\bf k}[x] $ and $L_{\bf k}(\Gamma_3) \cong {\bf k}[x,x^{-1}]$ where $v \leftrightarrow 1 , \quad e \leftrightarrow x^{-1}, \quad e^* \leftrightarrow x$. \\
More generally, if $\Gamma$ is finite and the cycles in $\Gamma$ have no exits then $L_{\bf k}(\Gamma)$ is isomorphic to a direct sum of matrix algebras $M_n({\bf k})$ and/or $M_n({\bf k}[x,x^{-1}])$ with each summand $M_n({\bf k})$ corresponding to a sink as in (ii) and each summand $M_n({\bf k}[x,x^{-1}])$ corresponding to 
a cycle $C$ with $n=\vert P_{C} \vert $ as shown in Proposition \ref{aas} below.

\medskip
\medskip

(iv) $\Gamma_4$ is known as the Toeplitz digraph since by 
completing $L_{\mathbb{C}}(\Gamma_4)$ we can obtain the Toeplitz $C^*$-algebra. $L_{\bf k}(\Gamma_4)$ is isomorphic to the Jacobson algebra ${\bf k}\langle x, y \rangle /(1-xy)$ where: 
$$x \leftrightarrow e^*+f^* \qquad \quad y \leftrightarrow e+f  \qquad \quad 1\leftrightarrow v+w$$ $$v \leftrightarrow yx  \qquad \quad w \leftrightarrow 1-yx  \qquad \quad 
e \leftrightarrow y^2x  \qquad \quad e^*\leftrightarrow yx^2$$
This is a graded isomorphism of $*$-algebras where $\vert x\vert=-1$, $\vert y\vert=1$ and $x=y^*$.

\medskip
\medskip

(v)  $ {\bf k} R_n \cong {\bf k} \langle x_1, x_2,...,x_n \rangle$ and $ L_{\bf k}(R_n) \cong  {\bf k} \langle x_1,...,x_n, y_1, ...,y_n \rangle /I$ where $I$ is generated by $\sum_{i=1}^n x_iy_i- 1$ and $y_ix_j-\delta_{i,j} $ for $1\leq i,j \leq n$. When ${\bf k}$ is a field, $L_{\bf k}(R_n)$  is  the Leavitt algebra $L_{\bf k}(1,n)$.

\medskip
\medskip

We sometimes suppress the subscript ${\bf k}$ in the notation $L_{\bf k}(\Gamma)$. If the digraph $\Gamma$ is fixed and clear from the context we may abbreviate $L(\Gamma)$ to $L$. From now on we will omit the parentheses when the source and target functions $s$, $t$ are applied, to reduce notational clutter.

\medskip
\medskip
The relations (V) simply state that the vertices are mutually orthogonal idempotents. The relations (E) implies that $e \in seLte$ and $e^* \in teLse$ for every $e \in E$. The Leavitt path algebra $L$ as a vector space is $\bigoplus vLw$ where the sum is over all pairs $(v,w) \in V \times V$ since $V\sqcup E\sqcup E^*$ generates $L$. (This direct sum gives the Pierce decomposition of $L$.) If we only impose the relations (V) and ($E$) then we obtain ${\bf k}\tilde{\Gamma}$, the \textbf{path }(or \textbf{quiver}) \textbf{algebra} of the extended digraph $\tilde{\Gamma}$ : The paths in $\tilde{\Gamma}$ form a basis of the free ${\bf k}$-module ${\bf k}\tilde{\Gamma}$. The multiplicative structure of ${\bf k}\tilde{\Gamma}$ is given by:  the product $pq$ of two paths $p$ and $q$ is their concatenation if $tp=sq$ and 0 otherwise, extended linearly. 
$L_{\bf k}(\Gamma)$ is a quotient of $ {\bf k}  \tilde{\Gamma}$ by the ideal generated by the Cuntz-Krieger relations (CK1), (CK2). The algebras $ {\bf k}\tilde{\Gamma}$ and $L_{\bf k}(\Gamma)$ are unital if $V$ is finite, in which case the sum of all the vertices is 1.

\begin{fact} \label{fact}
(i) If $p$ and $q$ are paths in $\Gamma$ then $p^*q=0$ unless 
   $q$ is an initial segment of $p$  (i.e., $p=qr$) or $p$ is an initial segment of $q$. \\
\noindent
(ii) $L_{\bf k}(\Gamma)$ is spanned by $\left\{pq^* \vert \> p,\> q \in Path (\Gamma), \> tp=tq \right\}$ as a ${\bf k}$-module.

\medskip
\medskip

\noindent
(iii) If $C$ is a cycle with no exit then $CC^*=sC=C^*C$. 
\end{fact}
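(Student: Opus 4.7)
The plan is to prove the three parts in order, each relying only on the defining relations (V), (E), (CK1), (CK2) and (for parts (ii) and (iii)) the previous part.

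For (i) I will induct on $l(p)+l(q)$. If either $p$ or $q$ has length $0$ then it is a vertex, and the claim is immediate from (V) and (E): a vertex is trivially an initial segment of any path starting at it, and $p^*q=0$ otherwise. For the inductive step write $p=e_1\cdots e_m$ and $q=f_1\cdots f_n$ with $m,n\ge 1$, so $p^*q=e_m^*\cdots e_2^*(e_1^*f_1)f_2\cdots f_n$. By (CK1) this vanishes unless $e_1=f_1$, in which case $e_1^*f_1=te_1$; then (E) absorbs the vertex $te_1=se_2=sf_2$ into either $e_2^*$ or $f_2$, so $p^*q=(e_2\cdots e_m)^*(f_2\cdots f_n)$. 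The inductive hypothesis on the shorter paths gives the conclusion.

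For (ii) let $S$ be the ${\bf k}$-span of $\{pq^*\mid p,q\in\mathrm{Path}(\Gamma),\ tp=tq\}$. Every generator lies in $S$: $v=v\cdot v^*$, $e=e\cdot (te)^*$, and $e^*=(te)\cdot e^*$ (all with matching targets). So it suffices to show $S$ is closed under multiplication. A typical product is
\[
(pq^*)(p'q'^*)=p(q^*p')q'^*.
\]
Apply part (i) to $q^*p'$. If neither $q$ nor $p'$ is an initial segment of the other, the product is $0$. If $p'=qr$, then $q^*p'=(tq)r=r$, giving $(pr)q'^*$; the target condition holds because $t(pr)=tr=tp'=tq'$, and $pr$ is a legitimate path since $tp=tq=sr$. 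If instead $q=p'r$, then $q^*p'=r^*(tp')=r^*$, giving $p(q'r)^*$; here $tq'=tp'=sr$ makes $q'r$ a path and $tp=tq=tr=t(q'r)$ gives the target condition. In every case the product lies in $S$, so $S$ is a subalgebra containing the generators, hence $S=L_{\bf k}(\Gamma)$.

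For (iii), write $C=e_1\cdots e_n$ so that $se_i=te_{i-1}$ (indices mod $n$), $sC=se_1=te_n$. The no-exit hypothesis says $s^{-1}(se_i)=\{e_i\}$, so (CK2) yields $e_ie_i^*=se_i$ for each $i$. Telescoping from the inside,
\[
CC^*=e_1\cdots e_{n-1}(e_ne_n^*)e_{n-1}^*\cdots e_1^*=e_1\cdots e_{n-1}(se_n)e_{n-1}^*\cdots e_1^*=e_1\cdots e_{n-1}e_{n-1}^*\cdots e_1^*,
\]
where the last equality uses $se_n=te_{n-1}$ and (E); iterating collapses everything to $e_1e_1^*=se_1=sC$. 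Dually, using (CK1) repeatedly, $C^*C=e_n^*\cdots e_2^*(e_1^*e_1)e_2\cdots e_n=e_n^*\cdots e_2^*e_2\cdots e_n=\cdots=e_n^*e_n=te_n=sC$.

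The only step that needs genuine care is the bookkeeping of sources and targets in part (ii); the computations in (i) and (iii) are essentially telescopic and present no real obstacle.
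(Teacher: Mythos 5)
Your proof is correct and follows essentially the same route as the paper's: peel off matching initial arrows using (CK1) and (E) for part (i), use (i) to show the span of $\{pq^*\}$ is closed under multiplication for part (ii), and telescope with $e_ie_i^*=se_i$ (from (CK2) and the no-exit hypothesis) and $e_i^*e_i=te_i$ for part (iii). You have merely made the induction and the source/target bookkeeping explicit where the paper leaves them implicit.
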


\begin{proof}
(i) If $p$ is not an initial segment of $q$ and $q$ is not an initial segment of $p$ then using the relations (CK1), (E) and (V) when necessary, we can simplify $p^*q$ until we get a term $e^*f$ with $e\neq f$, which is 0 by (CK1).

\medskip
\medskip
(ii) If $p=qr$ then $p^*q$ simplifies to $r^*$, similarly if $q=pr$ then $p^*q=r$. Hence (i) implies that multiplying terms of the form $pq^*$ we get 0 or another term of this form. Also, if $tp\neq tq$ then $pq^*=0$ by (V) and (E). Thus $L_{\bf k}(\Gamma)$ is generated as a ${\bf k}$-module by $pq^*$ with $p, \> q$ in $Path(\Gamma)$ and $tp=tq$.

\medskip
\medskip

(iii) As in (ii), $C^*C=tC=sC$ after applying (CK1), (V) and (E) repeatedly. Since $C$ has no exit,  $se=ee^*$ for each arrow $e$ on $C$ by (CK2). Thus $CC^*=sC$ using (CK2) and (E).
\end{proof}

\medskip
\medskip 
 $L_{\bf k}(\Gamma)$ is a $\mathbb{Z}$-graded $*$-algebra: the $\mathbb{Z}$-grading on the generators is given by $\lvert v \rvert =0$ for $v$ in $V$, $\lvert 
 e \lvert =1$ and $\lvert e^* \rvert =-1$ for $e$ in $E$. This defines a grading on $L_{\bf k}(\Gamma)$ since all the relations are homogeneous. The linear extension of $*$ on paths  to $L_{\bf k}(\Gamma)$ induces a grade-reversing involutive anti-automorphism, that is, $\lvert a^*\rvert =- \lvert a \rvert$ for all homogeneous $a$ in $L_{\bf k}(\Gamma)$.
  The categories of left $L_{\bf k}(\Gamma)$-modules and right $L_{\bf k}(\Gamma)$-modules are equivalent via the anti-automorphism $*$.

\medskip
\medskip

 We may consider $G$-gradings on $L_{\bf k}(\Gamma)$ for any group $G$, with the generators $V \sqcup E \sqcup E^*$ being homogeneous. Since $v^2=v$ and $e^*e=te$ we have: (i) $ \lvert  v \rvert_{_{G}} =1_{_{G}}$ and (ii) $\lvert e^*\rvert_{_G}= \lvert e\rvert^{-1}_{_G}$. Conversely, any function from $V \sqcup E \sqcup E^*$ to $G$ satisfying (i) and (ii) defines a $G$-grading on $L_{\bf k}(\Gamma)$ as the remaining relations are homogeneous. A morphism (or a refinement) from a $G$-grading to an $H$-grading on the algebra $A$ is given by a group homomorphism $\phi:G \longrightarrow H$ such that for all $h \in H$, $A_h= \oplus_{\phi (g)=h} A_g$ where $A_g:= \{a \in A  :\lvert a\rvert_{_{G}}=g \} \cup \{0\} $. There is a universal (or initial) $F_E$-grading on $L_{\bf k}(\Gamma)$ where  $F_E$ is the free group on the set $E$ of arrows, which is a refinement of all others:

\begin{proposition}
Let $F_{E}$ be the free group on $E$. The $F_E$-grading defined on $L_{\bf k}(\Gamma)$ by $\vert v\vert =1$, $\vert e\vert= e$ and $\vert e^*\vert =e^{-1}$ is an initial object in the category of $G$-gradings of 
$L_{\bf k} (\Gamma)$ with the generators $V \sqcup E \sqcup E^*$ being homogeneous.
\end{proposition}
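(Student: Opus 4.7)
The plan is to verify the two halves of the universal property: existence and uniqueness of a grading morphism from the $F_E$-grading to any other $G$-grading.

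First I would check that the proposed $F_E$-grading is indeed well-defined. Conditions (i) and (ii) from the preceding paragraph hold trivially: $|v|_{F_E} = 1$ by definition, and $|e^*|_{F_E} = e^{-1} = (|e|_{F_E})^{-1}$. The paper's preceding remark then guarantees that this yields a genuine $F_E$-grading on $L_{\bf k}(\Gamma)$, so the $F_E$-grading is an object in the stated category.

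Next, given any $G$-grading on $L_{\bf k}(\Gamma)$ with $V \sqcup E \sqcup E^*$ homogeneous, I would construct $\phi : F_E \to G$ by the universal property of the free group, setting $\phi(e) := |e|_G$ for each $e \in E$. I must show that $\phi$ is a morphism of gradings, i.e., $B_h = \bigoplus_{\phi(g)=h} A_g$, where $A_g$ and $B_h$ denote the $F_E$- and $G$-homogeneous components respectively. For this I would argue that every monomial $m = x_1 x_2 \cdots x_n$ in the generators $V \sqcup E \sqcup E^*$ satisfies $|m|_G = \phi(|m|_{F_E})$: by conditions (i) and (ii) for the $G$-grading together with the definition of $\phi$, we have $\phi(|x_i|_{F_E}) = |x_i|_G$ for each generator, and then multiplicativity of both gradings and of $\phi$ gives the claim for $m$. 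Since $L_{\bf k}(\Gamma)$ is spanned by such monomials, every $F_E$-homogeneous element of degree $g$ is $G$-homogeneous of degree $\phi(g)$, so $A_g \subseteq B_{\phi(g)}$. Combined with the fact that $\bigoplus_g A_g = L_{\bf k}(\Gamma) = \bigoplus_h B_h$, this forces the equality $B_h = \bigoplus_{\phi(g)=h} A_g$.

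Finally, uniqueness of $\phi$ is forced by the fact that each arrow $e$ lies in $A_e \cap B_{|e|_G}$ (and is nonzero there), so any morphism $\psi$ must send $e \mapsto |e|_G$; since $F_E$ is free on $E$, this determines $\psi$ uniquely, and $\psi = \phi$. The only mild subtlety, and the step I would be most careful about, is verifying that the decomposition of an arbitrary element into $F_E$-homogeneous components refines the $G$-decomposition rather than merely being compatible with it on generators; this is precisely the content of the monomial computation above, but it needs the explicit spanning observation from Fact \ref{fact}(ii) (or just the fact that $V \sqcup E \sqcup E^*$ generates the algebra) to pass from generators to arbitrary elements.
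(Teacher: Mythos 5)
Your proposal is correct and follows essentially the same route as the paper, whose entire proof is the single line defining $\phi(e)=\vert e\vert_{_G}$ via the universal property of the free group; you have simply written out the verifications (that monomials satisfy $\vert m\vert_G=\phi(\vert m\vert_{F_E})$, that this forces $B_h=\bigoplus_{\phi(g)=h}A_g$, and that $\phi$ is unique because arrows are nonzero) that the paper leaves implicit. Nothing in your expansion deviates from or adds to the paper's intended argument.
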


\begin{proof}
For any $G$-grading let $\phi: F_E \rightarrow G$ be the homomorphism given by $\phi(e)= \vert e\vert_{_G}$.
\end{proof}

\medskip
\medskip

Universal grading is used in the proof of Proposition \ref{Path}(i) stating that the path algebra ${\bf k}\Gamma$ may be identified with a subalgebra of $L_{\bf k}(\Gamma)$. When ${\bf k}$ is a field, a (different) proof of this basic fact was originally given in \cite[Lemma 1.6]{goo09}.

\medskip
\medskip

A subset $H$ of $V$ is \textit{hereditary} if $se \in H$ implies that $te \in H$ for all $e \in E$. 
The hereditary closure of $X\subseteq V$ is the set 
$V_{X \> \leadsto}$ of successors of $X$. The subset $H$ is \textit{saturated} if  $te \in H $ for all $e$ in $s^{-1}(v)$ implies that $v\in H$ for each non-sink $v$ with $s^{-1}(v)$ finite. An intersection of hereditary saturated subsets of $V$ is hereditary saturated. The \textit{hereditary saturated closure} of a subset $X$ of $V$, denoted by $\Bar{X}$, is the smallest hereditary saturated subset of $V$ containing $X$, that is, the intersection of all hereditary saturated subsets of $V$ containing $X$. 

\begin{fact} \label{H&S}
If $\Gamma$ is a row-finite digraph then the hereditary saturated closure of $X\subseteq V$ consists of all vertices $v\in V$ such that every path starting at $v$ and ending at a sink and every infinite path starting at $v$ meets a successor of some $x \in X$.  
\end{fact}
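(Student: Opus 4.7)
Let $Y$ denote the set of vertices described in the statement. The plan is to verify both inclusions $\overline{X} \subseteq Y$ and $Y \subseteq \overline{X}$. A useful preliminary observation is that $V_{X \leadsto}$ is itself hereditary and satisfies $V_{X \leadsto} \subseteq \overline{X}$ (since $\overline{X}$ is hereditary and contains $X$); in particular every $v \in V_{X \leadsto}$ already lies in $Y$, because any path starting at $v$ meets $V_{X \leadsto}$ at its source. So $X \subseteq Y$.

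To prove $\overline{X} \subseteq Y$ I would show that $Y$ is both hereditary and saturated. Hereditary: given $v \in Y$ and an arrow $e$ from $v$ to $w$, take any ``maximal'' path $p$ starting at $w$ (meaning either ending at a sink or being infinite); then $ep$ is a maximal path from $v$, so meets $V_{X \leadsto}$ by hypothesis. If the hit is at $v$, then $v \in V_{X \leadsto}$ forces $w \in V_{X \leadsto} \subseteq Y$ via hereditariness of $V_{X \leadsto}$; otherwise the hit already lies on $p$. Either way $p$ meets $V_{X \leadsto}$, so $w \in Y$. Saturated: given a non-sink $v$ with $s^{-1}(v)$ finite and $t(e) \in Y$ for every $e \in s^{-1}(v)$, any maximal path from $v$ factors as $e p'$ where $p'$ is a maximal path from $t(e) \in Y$, and hence meets $V_{X \leadsto}$.

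For the reverse inclusion I would argue contrapositively by a greedy construction. For $v \notin \overline{X}$ put $v_0 = v$; inductively, if $v_i$ is a sink, stop, and otherwise use saturation of $\overline{X}$ together with row-finiteness of $s^{-1}(v_i)$ to choose an arrow $e_{i+1} \in s^{-1}(v_i)$ whose target $v_{i+1} := t(e_{i+1})$ still lies outside $\overline{X}$ (such an arrow exists, for if every target lay in $\overline{X}$ then saturation would force $v_i \in \overline{X}$). The result is either a finite path ending at a sink $v_n$ or an infinite path $e_1 e_2 \cdots$, whose vertex set lies in $V \setminus \overline{X} \subseteq V \setminus V_{X \leadsto}$, and which therefore witnesses $v \notin Y$.

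The most delicate point is the hereditary step: the prepended arrow $e$ may be the only piece of $ep$ hitting $V_{X \leadsto}$, so one has to invoke hereditariness of $V_{X \leadsto}$ itself to transfer the hit from $v$ to $w$ (and hence onto $p$). Row-finiteness is precisely what powers the greedy construction, converting saturation of $\overline{X}$ into existence of a next step in $V \setminus \overline{X}$, and ensuring that a non-terminating construction genuinely yields an infinite path rather than stalling.
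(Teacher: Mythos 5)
The paper states Fact \ref{H&S} without proof, so there is no internal argument to compare yours against; judged on its own, your proof is correct and complete. Both directions are handled properly: writing $Y$ for the set described in the statement, you show $\overline{X}\subseteq Y$ by verifying that $Y$ is hereditary and saturated and contains $X$ (correctly flagging and resolving the one delicate point, namely that when the prepended arrow $e$ is the only place $ep$ meets $V_{X\leadsto}$ you must use hereditariness of $V_{X\leadsto}$ itself to push the hit onto $p$), and you show $Y\subseteq\overline{X}$ contrapositively by the greedy construction of a maximal path avoiding $\overline{X}$, which is exactly where row-finiteness and saturation of $\overline{X}$ enter. The edge cases (a sink $v\notin V_{X\leadsto}$, where the length-zero path is the witness, and the fact that every vertex of a row-finite digraph admits at least one maximal path) are consistent with the paper's conventions. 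For context, the other standard route in the literature builds $\overline{X}$ from below as the union of an increasing chain obtained by alternately applying the hereditary closure and the saturation operator; your characterization "from above" via the universal property of $\overline{X}$ is cleaner for the direction $\overline{X}\subseteq Y$ and is the one this Fact is implicitly relying on.
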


\begin{lemma} \label{kesisim}
(i) If $H \subseteq V$ is hereditary then the ideal $(H)$ generated by $H$ is spanned by $\{ pvq^*\> \vert \> p,q \in Path(\Gamma) \text{ and } v \in H\> \}$.\\
(ii) If $X, Y \subseteq V$ then $(X\cap Y) \subseteq (X)(Y)$. \\
(iii) If $H_1$ and $H_2$ are hereditary subsets of $V$ then $(H_1)(H_2)= (H_1\cap H_2)$. 
\end{lemma}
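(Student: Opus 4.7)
The plan is to prove the three parts in order, since both (ii) and (iii) will rely on (i). For (i), I would introduce the ${\bf k}$-linear span $S := \mathrm{span}_{\bf k}\{pvq^* : p,q \in Path(\Gamma),\, v \in H\}$ and show $S = (H)$. The containment $S \subseteq (H)$ is immediate, and $H \subseteq S$ (take $p = q = v$, noting $v = v \cdot v \cdot v^*$), so it is enough to show that $S$ is a two-sided ideal. By Fact \ref{fact}(ii), every element of $L_{\bf k}(\Gamma)$ is a ${\bf k}$-linear combination of terms $rs^*$, so only closure under multiplication by such terms needs to be checked. In the product $rs^* \cdot pvq^*$, Fact \ref{fact}(i) forces $s^*p$ to simplify either to a path $p'$ (when $p = sp'$), giving $rp'vq^* \in S$ directly, or to a path-star $(q'')^*$ (when $s = pq''$), in which case the whole product collapses to $r(qq'')^*$; since $qq''$ starts at $v \in H$, its endpoint lies in $H$ by hereditariness and the result is again in $S$. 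Right multiplication is symmetric via the involution $*$.

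For (ii), every vertex of $\Gamma$ is idempotent, so any $v \in X \cap Y$ factors as $v = v \cdot v$ with the first factor in $(X)$ and the second in $(Y)$. Thus $X \cap Y \subseteq (X)(Y)$, and since $(X)(Y)$ is an ideal, the smallest ideal $(X \cap Y)$ containing $X \cap Y$ satisfies $(X \cap Y) \subseteq (X)(Y)$.

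For (iii), one inclusion is immediate from (ii). For the reverse, using (i), $(H_1)(H_2)$ is spanned by products $(p q^*)(r s^*)$ with $tp = tq \in H_1$ and $tr = ts \in H_2$. Applying Fact \ref{fact}(i) to $q^*r$ leaves a single path whose endpoint is reachable both from $tq \in H_1$ and from $tr \in H_2$ along paths in $\Gamma$; hereditariness of each $H_i$ then forces this common endpoint into $H_1 \cap H_2$, and invoking (i) for $H_1 \cap H_2$ places the product inside $(H_1 \cap H_2)$. The main obstacle across the lemma is not any single deep step but the careful bookkeeping of source/target conditions through the several case distinctions in (i) and (iii), together with the observation that it is precisely hereditariness of $H$ (respectively $H_1, H_2$) that upgrades ``the path starts in $H$'' to ``the path ends in $H$'', which is what matches the $pvq^*$ form required by (i).
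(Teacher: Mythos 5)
Your proof is correct and follows essentially the same route as the paper: part (i) rests on the same case analysis via Fact \ref{fact}(i) (the paper runs the computation directly on the spanning set $p_1q_1^*vp_2q_2^*$ of $(H)$ rather than verifying that your span $S$ is an ideal, but the calculation is identical), and parts (ii) and (iii) match the paper's arguments. One small slip worth fixing: in the second case of (i) the path $qq''$ starts at $sq$, not at $v$; what you need (and what is true) is that $q''$ starts at $v\in H$, so $t(qq'')=t(q'')\in H$ by hereditariness.
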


\begin{proof}
(i) The ideal $(H)$ is spanned by elements of the form $p_1q_1^*v p_2q_2^*$ where $p_1,\> q_1, \> p_2, \> q_2$ are in $Path(\Gamma)$ with $tp_1=tq_1, \> sq_1=v=sp_2, \>  tp_2=tq_2$ and $ v\in H$, since the elements of $H$ as well as elements of the form 
 $ pq^*v $ or $vpq^*$ with $ p,\> q \in Path(\Gamma)$ and $ v\in H$  can all  be expressed in the form $p_1q_1^*v p_2q_2^*$.

\medskip
\medskip

By Fact \ref{fact}(i), $q_1^*p_2=q_3^*$ if $q_1=p_2q_3$ or $q_1^*p_2=p_3$ if $p_2=q_1p_3$ or $q_1^*p_2=0$. Since $H$ is hereditary, $v=sp_2 \in H$ gives $tq_2= tp_2 \in H$, similarly $v=sq_1$ gives $tp_1= tq_1 \in H$. Hence $p_1q_1^*vp_2q_2^*$ is $0$ or $p_1p_3(tq_2)q_2^*$ or $p_1(tp_1)q_3^*q_2^*$, proving our claim.

\medskip
\medskip

(ii) This follows since vertices are idempotents.

\medskip
\medskip

(iii) $ (H_1 \cap H_2) \subseteq (H_1)(H_2)$ by (ii) above. Conversely, if $p_1v_1q_1^*p_2v_2q_2^* \neq 0 $ with $v_1 \in H_1$ and $v_2 \in H_2$ then $p_1v_1q_1^*p_2v_2q_2^*=p_3 v q_3^*$ with $v=v_1$ or $v=v_2$ again by Fact \ref{fact}(i). Also $v \in H_1\cap H_2$ because $v_1 , \> v_2  \leadsto v$ and $H_1$, $H_2$ are hereditary. Hence $p_1v_1q_1^*p_2v_2q_2^* \in (H_1\cap H_2)$, yielding $(H_1)(H_2) \subseteq (H_1\cap H_2)$. 
\end{proof}

\medskip
\medskip

The full subgraph on a hereditary saturated $H \subseteq V$ is denoted by $\Gamma_H$. We obtain the subgraph $\Gamma/H$ by deleting all the vertices in $H$ and all arrows touching them, so $\Gamma/H$ is $\Gamma_{V\setminus H}$, the full subgraph on $V\setminus H$.

\medskip
\medskip
When ${\bf k}$ is a field, the following result is well-known   \cite[Lemma 2.4.3 and Corollary  2.4.13(i) ]{aam17}.

\begin{proposition} \label{hereditary} 
Let $\Gamma$ be a row-finite digraph and ${\bf k}$ a commutative ring with 1. \\
(i) If $I$ is an ideal of $L_{\bf k}(\Gamma)$  and $\lambda \in {\bf k}$ then $\{v\in V \> \vert \> \lambda v \in I \}$ is a hereditary saturated subset of $V$.\\      
(ii) If $H$ is a hereditary saturated subset of $V$ and $\Gamma/H$ as above then $L_{\bf k}(\Gamma/H) \cong L_{\bf k}(\Gamma)/ I$ as $\mathbb{Z}$-graded $*$-algebras 
where $I:=(H)$ is the ideal generated by $H$. Also, $vL_{\bf k}(\Gamma/H) \cong vL_{\bf k}(\Gamma)/ vI$ for all $v \notin H$.\\
(iii) If $X\subseteq V$ then $H:=(X) \cap V$ is the hereditary saturated closure of $X$ and $(X)=(H)$. 
\end{proposition}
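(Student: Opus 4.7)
The plan is to establish (i), (ii), (iii) in order, with (iii) leaning on both (i) and (ii). All three parts are proved directly from the defining relations and the universal property of $L_{\bf k}(\Gamma/H)$.

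For (i), set $H_\lambda := \{v \in V : \lambda v \in I\}$ and check the two closure conditions directly. For hereditary: if $v \in H_\lambda$ and $e \in E$ with $se = v$, then (E) and (CK1) give $\lambda \, te = e^*(\lambda v)e \in I$. For saturation: if $v$ is a non-sink with $s^{-1}(v)$ finite and $te \in H_\lambda$ for every $e$ with $se = v$, then (CK2) yields $\lambda v = \sum_{se=v} e(\lambda \, te)e^* \in I$. This part is very short.

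For (ii), I would construct mutually inverse $\mathbb{Z}$-graded $*$-homomorphisms. The forward map $\phi : L_{\bf k}(\Gamma/H) \to L_{\bf k}(\Gamma)/I$ sends each generator of $\Gamma/H$ to its coset; the only nontrivial verification is (CK2), which works because any ``missing'' term $ee^*$ (with $se = v \notin H$ but $te \in H$) lies in $I$ via $ee^* = e(te)e^*$. For the reverse map $\psi : L_{\bf k}(\Gamma) \to L_{\bf k}(\Gamma/H)$ I send every generator touching $H$ to $0$ and every other generator to itself. The hereditary hypothesis makes (E) and (CK1) automatic (if $se \in H$ then $te \in H$). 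The delicate case, and the main obstacle, is (CK2) for a non-sink $v \notin H$: one needs $\sum_{se=v, \, te \notin H} ee^* = v$ in $L_{\bf k}(\Gamma/H)$, which holds by (CK2) in $\Gamma/H$ \emph{provided} the index set is nonempty. This is exactly what saturation guarantees, since otherwise $v$ would be forced into $H$. Then $\psi$ kills $H$ and so descends to $\bar{\psi} : L_{\bf k}(\Gamma)/I \to L_{\bf k}(\Gamma/H)$; $\phi$ and $\bar{\psi}$ are mutually inverse on generators, and both preserve the grading and $*$ by construction. The second assertion $vL_{\bf k}(\Gamma/H) \cong vL_{\bf k}(\Gamma)/vI$ follows by restricting $\phi$ and $\bar{\psi}$ to the corresponding ``left corners''.

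For (iii), I would first apply (i) with $\lambda = 1$ to see that $H := (X) \cap V$ is a hereditary saturated subset of $V$ containing $X$, hence $\bar{X} \subseteq H \subseteq (X)$, which gives $(\bar{X}) \subseteq (X)$; the reverse inclusion is trivial, so $(X) = (\bar{X})$. To prove equality $H = \bar{X}$, I would invoke (ii) with the hereditary saturated set $\bar{X}$ in place of $H$: the isomorphism $L_{\bf k}(\Gamma)/(\bar{X}) \cong L_{\bf k}(\Gamma/\bar{X})$ sends each $v \in V \setminus \bar{X}$ to the corresponding vertex of $\Gamma/\bar{X}$, which is nonzero. Hence no such $v$ lies in $(\bar{X}) = (X)$, so $H \subseteq \bar{X}$, completing the equality. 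The only subtle point in (iii) is the nonvanishing of vertices in $L_{\bf k}(\Gamma/\bar{X})$, but that is built into the statement via (ii), so (iii) reduces to bookkeeping once (i) and (ii) are in hand.
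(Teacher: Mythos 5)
Your proposal is correct and follows essentially the same route as the paper's own proof: the same direct verification for (i), the same pair of mutually inverse graded $*$-homomorphisms (with saturation used exactly where you place it) for (ii), and the same argument for (iii) via surviving vertices in the quotient. The only caveat is that the nonvanishing of vertices in $L_{\bf k}(\Gamma/\bar{X})$ is not literally ``built into'' (ii) but requires the existence of a representation in which vertices act nontrivially (Proposition \ref{pp^*}(ii)); the paper's proof relies on the same fact implicitly.
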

\begin{proof}
(i) If $e \in E$ with $se =v$
and $\lambda v \in I$ then $
\lambda te =\lambda        e^*e=e^*(\lambda v)e \in I$, hence $\{v\in V \> \vert \> \lambda v \in I \}$ is hereditary. If $v$ is not a sink and $\lambda te \in I$ for all $e$ with $se =v$ then $\lambda v=\lambda \sum ee^*=\sum e(\lambda te)e^* \in I$, hence $\{v\in V \> \vert \> \lambda v \in I \}$ is saturated.

\medskip
\medskip
(ii) We define a $*$-homomorphism from $L_{\bf k}(\Gamma/H)$ to $ L_{\bf k}(\Gamma)/ (H)$ by sending all vertices $v$ to $v+(H)$ and all arrows $e$ to $e+(H)$. The relations $(V)$, $(E)$ and $(CK1)$ are automatically satisfied. If $v$ is not a sink in $\Gamma/H$ then
$$v=\sum_{se=v \>, \> 
te \notin H} ee^* \quad +\sum_{sf=v\>, \>
tf \in H} ff^*$$ 

\noindent
by $(CK2)$ in $L_{\bf k}(\Gamma)$. Since the second sum is in $(H)$ we see that $(CK2)$ is preserved and thus we have a homomorphism.

\medskip
\medskip

We define a $*$- homomorphism from $L_{\bf k}(\Gamma)$  to $L_{\bf k}(\Gamma/H)$ by sending $v$ to $v$ if $v \notin H$ and to 0 otherwise, similarly $e$ to $e$ if $e$ is an arrow in $\Gamma/H$ and to 0 otherwise. The relations $(V)$ and $(E)$ are immediate to verify. Most cases of $(CK1)$ are also easy to check. For the case of $e^*e=te $, if $te \notin H$ then $se \notin H$ since $H$ is hereditary, hence $e$ is in $\Gamma/H$ and the relation holds. When $v$ is not a sink in $\Gamma$ then $v$ is not a sink in $\Gamma/H$ because $H$ is saturated. Breaking up the right-hand side of $(CK2)$ in $L_{\bf k}(\Gamma)$ as above shows that this relation is preserved. The ideal $(H)$ is in the kernel of this homomorphism, so we have the induced homomorphism from $L(\Gamma)/(H)$ to $L(\Gamma/H)$.

\medskip
\medskip
The $*$-homomorphisms above are both $\mathbb{Z}$-graded and are inverses of each other. The last assertion follows from restricting the first homomorphism to $vL_{\bf k}(\Gamma/H)$.

\medskip
\medskip

(iii) By (i) above with $\lambda =1$, $(X)\cap V$ is hereditary saturated. If $H'$ is a hereditary saturated subset of $V$ containing $X$ then $(X)\cap V \subseteq (X) \subseteq (H')$ and $L_{\bf k}(\Gamma/H')\cong L_{\bf k}(\Gamma)/ (H')$ by (ii) above. Hence $(X)\cap V \subseteq H'$ since the vertices in $H'$ are exactly those missing from $\Gamma/H'$. Thus $(X)\cap V$ is contained every hereditary saturated subset containing $H'$, so $H=(X)\cap V$ is the hereditary saturated closure of $X$. Since $X \subseteq H$ and $H \subseteq (X)$ we have $(X)=(H)$.
\end{proof}

\section{$L_{\bf k}(\Gamma)$-modules and Quiver Representations} \label{mod}

\textit{All our modules will be right modules unless specified otherwise.}
 
 \begin{fact} \label{Hom}
If $ \alpha^2=\alpha  \in R$ and $M$ is an $R$-module then $\alpha R$ is a (finitely generated) projective $R$-module and $Hom^{R}(\alpha R, M)\cong M \alpha$ via $f \mapsto f(\alpha)$ and $m \mapsto m \> \centerdot$ where $m \> \centerdot$ is left multiplication by $m \in M\alpha = \{ m \in M \> \vert \> m\alpha=m \}$. In particular, $End^R(\alpha R)\cong \alpha R\alpha$. Similarly, if $I$ is an ideal of $R$ then $Hom^R(\alpha R/\alpha I , M) \cong \{m \in M\alpha \vert \> mI=0 \} $ via $f \mapsto f(\alpha +\alpha I)$ and $m \mapsto m \> \centerdot$ as above.
 \end{fact}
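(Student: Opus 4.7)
The plan is to verify a standard Yoneda-style identification for principal projective modules attached to an idempotent, in four short steps: (1) projectivity/finite generation of $\alpha R$; (2) the bijection $\mathrm{Hom}^R(\alpha R, M) \cong M\alpha$; (3) the ring version for $\mathrm{End}^R(\alpha R)$; (4) the quotient version, via left exactness of $\mathrm{Hom}$. None of these steps should require anything clever; the whole point is to record the explicit formulas that the rest of the paper will use.

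First I would observe that since $\alpha^2=\alpha$, every $r\in R$ splits as $r = \alpha r + (1-\alpha)r$ (working in the unitization if $R$ has only local units), giving $R = \alpha R \oplus (1-\alpha) R$ as right $R$-modules. Thus $\alpha R$ is a direct summand of a free rank-one module, hence projective, and it is cyclic (generated by $\alpha$), hence finitely generated.

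Next I would define $\Phi\colon \mathrm{Hom}^R(\alpha R, M) \to M\alpha$ by $\Phi(f)=f(\alpha)$ and $\Psi\colon M\alpha \to \mathrm{Hom}^R(\alpha R, M)$ by $\Psi(m)(x)=mx$. The image of $\Phi$ really lies in $M\alpha$, since $f(\alpha)\alpha = f(\alpha^2)=f(\alpha)$, and $\Psi(m)$ is $R$-linear by associativity. The compositions are identities: $\Phi(\Psi(m))=m\alpha = m$; and for $x\in \alpha R$, noting that $x=\alpha x$, the $R$-linearity of $f$ gives $\Psi(\Phi(f))(x) = f(\alpha)\,x = f(\alpha x) = f(x)$. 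For the endomorphism statement I would set $M=\alpha R$, so $M\alpha = \alpha R\alpha$; a direct check that $\Phi(f\circ g) = f(g(\alpha)) = f(\alpha)\,g(\alpha) = \Phi(f)\,\Phi(g)$ (using $f(x)=f(\alpha)x$ just proved) promotes the bijection to a ring isomorphism.

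Finally, I would apply $\mathrm{Hom}^R(-,M)$ to the short exact sequence
\[
0 \longrightarrow \alpha I \longrightarrow \alpha R \longrightarrow \alpha R/\alpha I \longrightarrow 0
\]
and use left exactness to identify $\mathrm{Hom}^R(\alpha R/\alpha I, M)$ with the kernel of the restriction $\mathrm{Hom}^R(\alpha R, M) \to \mathrm{Hom}^R(\alpha I, M)$. Transporting along the isomorphism $\Phi$ of the previous step, this kernel becomes the set of $m\in M\alpha$ for which $m\cdot x = 0$ for every $x\in \alpha I$; since $m\alpha=m$, this is equivalent to $mI=0$, yielding the claimed description.

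The only mildly subtle point, and the thing worth spelling out carefully rather than the main obstacle (there isn't one), is the identity $f(x)=f(\alpha)\,x$ for $x\in \alpha R$, which comes from $x=\alpha x$ and $R$-linearity; everything in the fact follows from this, compatibly with the $*$-structure considerations needed elsewhere in the paper.
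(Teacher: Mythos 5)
Your proof is correct. The paper states this as a Fact with no proof at all (the explicit formulas $f\mapsto f(\alpha)$ and $m\mapsto m\,\centerdot$ in the statement are exactly the mutually inverse maps you verify), so your argument --- the splitting $R=\alpha R\oplus(1-\alpha)R$, the identity $f(x)=f(\alpha)x$ for $x\in\alpha R$, and left exactness of $\mathrm{Hom}$ for the quotient version --- supplies precisely the standard justification the authors left implicit.
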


The following result identifying the category of unital $L_{\bf k}(\Gamma)$-modules with a full subcategory of quiver representations of $\Gamma$ enables us to construct many $L_{\bf k}(\Gamma)$-modules and to derive some fundamental properties. A right $R$-module $M$ is \textit{unital} if $ M=MR$ where $R$ may not have 1. This definition agrees with the usual definition of a unital module when $R$ has 1. 

\medskip
\medskip 
 Recall that a quiver representation is a functor from the small category (also denoted by $\Gamma$) with objects $V$ and morphisms given by paths in $\Gamma$, to the category of unital {\bf k}-modules where {\bf k} is a commutative ring with 1 (classically {\bf k} is a field and the target category is finite dimensional vector spaces \cite{dw05}). The coefficient ring ${\bf k}$ is suppressed in the statement of the following proposition. 
 
 \begin{proposition} \label{teorem} 
If $\Gamma= (V,E,s,t)$ is a row-finite digraph then $Mod_{L(\Gamma)}$, the category of unital right $L(\Gamma)$-modules, 
 is equivalent to the full subcategory of quiver representations $\rho$ of $\Gamma$ satisfying the following condition $(Iso)$:

  $$\textit{For every nonsink } v\in V, \> \> \> \>  \bigoplus_{se=v} \rho(e): \rho ( v) \longrightarrow \bigoplus\limits_{se=v } \rho(te) \textit{ is an isomorphism.}$$ The right $L(\Gamma)$-module corresponding to the quiver representation $\rho$ is $\displaystyle M=\bigoplus_{v\in V} \rho(v)$ as a ${\bf k}$-module and the actions of generators of  $L(\Gamma)$ are given by the compositions
  $$- v : M=\bigoplus_{u \in V} \rho(u) \stackrel{pr_{\rho(v)}}{\longrightarrow} \rho(v) \hookrightarrow \bigoplus_{u \in V} \rho(u)=M ,$$
  $$-e : M=\bigoplus_{v\in V} \rho(v) \stackrel{pr_{se}}{\longrightarrow }\rho(se) \stackrel{\rho(e)}{\longrightarrow }\rho(te) \hookrightarrow \bigoplus \rho(v)=M ,$$ 
  $$-e^* : M=\bigoplus_{v\in V} \rho(v) \stackrel{pr_{te}}{\longrightarrow }\rho(te) \hookrightarrow   \bigoplus_{sf=se} \rho(tf) \stackrel{(\bigoplus \rho(f))^{-1}}{ \longrightarrow} \rho(se) \hookrightarrow \bigoplus_{v \in V} \rho(v) =M .$$ 
  
\end{proposition}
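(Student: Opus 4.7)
The plan is to exhibit an equivalence by constructing explicit quasi-inverse functors $F$ (from quiver representations satisfying (Iso) to unital $L(\Gamma)$-modules) and $G$ (in the other direction), check that each is well-defined, and then produce natural isomorphisms $GF \cong \mathrm{id}$ and $FG \cong \mathrm{id}$. On morphisms both functors are essentially forced: a morphism of quiver representations is a family of $\bf k$-linear maps commuting with all $\rho(e)$'s, which assembles (resp.\ restricts) to a morphism of the corresponding direct sums that commutes with the actions of $v$, $e$, $e^*$.

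The functor $F$ is the one prescribed in the statement: $F(\rho) := \bigoplus_{v} \rho(v)$ with the actions of $v, e, e^*$ as displayed. Verifying that this defines an $L(\Gamma)$-module amounts to the four relations. Relations (V) and (E) are immediate from the fact that $-v$ is the projection $pr_{\rho(v)}$ and $-e$ is concentrated in the $(se,te)$-block. For (CK1) and (CK2), write $\phi_v := \bigoplus_{se=v}\rho(e) : \rho(v) \to \bigoplus_{se=v}\rho(te)$, which is an iso by hypothesis, and observe that the definition of $-e^*$ funnels a vector in $\rho(te)$ into the $e$-th summand and then applies $\phi_v^{-1}$. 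Under this rewriting, (CK1) $m(e^*f) = \delta_{ef}\,m(te)$ becomes the identity $\phi_v\phi_v^{-1} = \mathrm{id}$ read off the $e$-th and $f$-th components, while (CK2) translates directly to $\phi_v^{-1}\phi_v = \mathrm{id}_{\rho(v)}$.

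The functor $G$ sends $M$ to the quiver representation with $\rho(v) := Mv$ and $\rho(e) : Mv \to M(te)$ given by right multiplication by $e$; functoriality along paths is automatic from associativity. To verify (Iso) at a non-sink $v$, I propose the candidate inverse $\sigma : \bigoplus_{se=v} M(te) \to Mv$, $(m_e) \mapsto \sum_e m_e e^*$, and check that $\sigma\phi_v = \mathrm{id}_{Mv}$ follows from (CK2) applied at $v$ (namely $m = m\sum ee^* = \sum(me)e^*$) while $\phi_v\sigma = \mathrm{id}$ follows from (CK1) (namely $(\sum_i m_i e_i^*)e_j = \sum_i \delta_{ij}m_i = m_j$).

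For the natural isomorphisms, $G(F(\rho))$ recovers $\rho$ on the nose: $Mv = \rho(v)$ because $v$ acts on $M = \bigoplus \rho(v)$ as $pr_{\rho(v)}$, and right multiplication by $e$ agrees with $\rho(e)$ by construction. For $FG(M) \cong M$, the key input is that a unital $M$ decomposes canonically as $M = \bigoplus_v Mv$; this uses that $L(\Gamma)$ has the finite sums of vertices as a set of local units together with the Pierce decomposition $L(\Gamma) = \bigoplus vL(\Gamma)w$, which writes every $mr$ as a finite sum of elements in the $Mw$'s, and uses orthogonality of the idempotents $\{v\}$ for directness. Once this decomposition is in place, the $F$-action on $\bigoplus Mv$ tautologically matches the original action of $V$ and $E$ on $M$, and the match for $E^*$ is exactly what the (Iso) verification of the preceding paragraph provides. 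The one delicate point — and the main obstacle of the proof — is pinning down that the relations (CK1) and (CK2) are precisely the content of the isomorphism (Iso): (CK2) supplies the right inverse of $\phi_v$ and (CK1) supplies the left inverse, so nothing more and nothing less than (Iso) is needed on the representation side.
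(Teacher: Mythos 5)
Your proposal is correct: the verification that (CK1) and (CK2) for the prescribed actions amount exactly to $\phi_v^{-1}$ being a two-sided inverse of $\phi_v=\bigoplus_{se=v}\rho(e)$, together with the quasi-inverse $\rho(v)=Mv$ and the decomposition $M=\bigoplus_v Mv$ for unital $M$, is essentially the argument of \cite[Theorem 3.2]{ko1}, to which the paper defers (noting only that it carries over verbatim from a field to a commutative ring with $1$). No substantive difference in approach.
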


The proof of Proposition \ref{teorem} is given in \cite[Theorem 3.2]{ko1} for a field $\mathbb{F}$, but works for a commutative ring ${\bf k}$ with 1. As in the statement of the Proposition \ref{teorem}, from now on $Mod_R$ will denote the category of unital right modules of the ring $R$.

\medskip
\medskip

{\bf In the following all digraphs will be row-finite.}

\begin{example} \label{nonzero p} 
We denote by ${\bf k}X$, the free ${\bf k}$-module with basis $X$. Let $\rho(v)={\bf k}\mathbb{Z}$ for all $v \in V$. Pick an isomorphism $$\varphi_v: \rho(v) \longrightarrow \bigoplus_{se=v} \rho(te)$$ for each non-sink $v\in V$. Let $\rho(e)$ be the composition 
$$ \rho(se)\stackrel{\varphi_{se}}{\longrightarrow} \bigoplus_{sf=se} \rho(tf) \stackrel{pr_e}{\longrightarrow} \rho(te)$$ 
and $\rho(e^*)$ be the composition
$$ \rho(te)\stackrel{\varphi_{se}}{\hookrightarrow} \bigoplus_{sf=se} \rho(tf) \stackrel{\varphi_{se}^{-1}}{\longrightarrow} \rho(se) .$$ 

Since condition (Iso) is satisfied by construction, we have an $L_{{\bf k}}(\Gamma)$-module.
\end{example}

We want to show that most modules obtained from unital modules using standard constructions, such as taking quotients, submodules, sums and extensions,  are also unital. We start with the following:

\begin{lemma} \label{unital}
 Let $\Gamma$ be a row-finite digraph. If $M$ is an $L_{\bf k}(\Gamma)$-module then the following are equivalent:\\
 (i) $M$ is unital.\\
 (ii) $V_m:= \{ v \in V \> \vert \> mv\neq 0\}$ is nonempty and finite for all  nonzero $m$ in $M$. \\
 (iii) $\displaystyle M=\sum_{v \in V} Mv= \bigoplus_{v \in V} Mv$.\\
 (iv) The homomorphism $\displaystyle M \bigotimes_{L_{\bf k}(\Gamma)} L_{\bf k}(\Gamma) \longrightarrow M$ sending $m\otimes a$ to $ma$ is an isomorphism.  
 \end{lemma}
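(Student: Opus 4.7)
The plan is to establish the cycle (i) $\Rightarrow$ (ii) $\Rightarrow$ (iii) $\Rightarrow$ (i), and then the equivalence (i) $\Leftrightarrow$ (iv). The workhorse throughout is the observation that $L := L_{\bf k}(\Gamma)$ admits a system of local units built from finite sums of vertices. Indeed, by Fact \ref{fact}(ii), every $a \in L$ is a finite ${\bf k}$-linear combination of monomials $pq^*$ satisfying $sp \cdot pq^* \cdot sq = pq^*$; hence for every finite subset $F \subseteq L$ there is a finite $V_0 \subseteq V$ such that $eae = a$ for all $a \in F$, where $e := \sum_{v \in V_0} v$, and $e^2 = e$ by relation $(V)$.

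For (i) $\Rightarrow$ (ii), write an arbitrary $m \in ML$ as a finite sum $m = \sum m_i a_i$, pick $V_0$ finite with $a_i e = a_i$ for all $i$ where $e = \sum_{v \in V_0} v$, and deduce $m = me = \sum_{v \in V_0} mv$. This forces $V_m \subseteq V_0$ (so finite), and when $m \neq 0$ it forces $mv \neq 0$ for some $v \in V_0$ (so nonempty). For (ii) $\Rightarrow$ (iii), given $m \in M$ set $e := \sum_{v \in V_m} v$ (using finiteness of $V_m$); a one-line check from relation $(V)$ gives $(m - me)w = 0$ for every $w \in V$, because the equality is automatic when $w \in V_m$ and both sides vanish by definition of $V_m$ when $w \notin V_m$. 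Hence $V_{m-me} = \emptyset$, so (ii) applied to $m - me$ forces $m = me = \sum_{v \in V_m} mv \in \sum_{v \in V} Mv$. The sum $\sum Mv$ is direct because right-multiplying any relation $\sum_{v \in F} m_v = 0$ (with $m_v \in Mv$ and $F$ finite) by a particular $w \in F$ isolates $m_w = 0$ via $vw = \delta_{v,w}v$.

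The implication (iii) $\Rightarrow$ (i) is immediate since $Mv \subseteq ML$. For (i) $\Leftrightarrow$ (iv), surjectivity of $\mu: M \otimes_L L \to M$ is tautologically equivalent to $M = ML$, which covers (iv) $\Rightarrow$ (i) and half of (i) $\Rightarrow$ (iv). For injectivity under (i), take $\sum m_i \otimes a_i \in \ker \mu$, use the local unit lemma to choose $e = \sum_{v \in V_0} v$ with $a_i e = a_i$ for all $i$, and slide the idempotent across the tensor:
$$\sum m_i \otimes a_i \;=\; \sum m_i \otimes a_i e \;=\; \sum (m_i a_i) \otimes e \;=\; 0 \otimes e \;=\; 0.$$
The only real difficulty is the bookkeeping of which local unit to pick at each step, but this is handled uniformly by the fact that every element of $L$ is contained in a ``corner'' $eLe$ cut out by a finite set of vertices.
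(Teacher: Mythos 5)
Your proposal is correct, and the core computations coincide with the paper's: you group terms by source vertices to get $M=\sum Mv$, use the observation that $(m-\sum mv)u=0$ for all $u\in V$ to pass from (ii) to (iii), and invoke orthogonality of the vertex idempotents for directness. The one place you genuinely diverge is the injectivity of $\mu\colon M\otimes_{L}L\to M$: the paper exhibits an explicit inverse $m\mapsto\sum_{v\in V}(mv\otimes v)$ (well-defined by the finiteness in (ii)), whereas you slide a local unit $e=\sum_{v\in V_0}v$ across the balanced tensor, $\sum m_i\otimes a_i=\sum m_ia_i\otimes e=0$. Both are one-line arguments; yours makes explicit that $L_{\bf k}(\Gamma)$ is a ring with local units and so plugs directly into the general theory of such rings (where ``unital'' and ``firm'' modules coincide), while the paper's explicit inverse avoids having to verify the balancing relation and is slightly more self-contained. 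Your local-unit lemma itself is justified correctly via Fact \ref{fact}(ii) and the identity $sp\cdot pq^*\cdot sq=pq^*$, so there is no gap.
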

 
 \begin{proof}
 (i) $\Leftrightarrow$ (iii): If $M$ is unital then $m=\sum m_ia_i$ for all $m \in M$ where $a_i=\sum \lambda_{ij}p_{ij}q_{ij}^*$ for finitely many $\lambda_{ij} \in {\bf k}$ and $p_{ij},\> q_{ij}$ in $Path (\Gamma)$. Rearranging $\sum m_i a_i$ by grouping terms with the same $sq_{ij}$ we see that $\displaystyle m \in \sum_{v \in V} Mv$. Since $V$ is a set of orthogonal idempotents, $\sum Mv=\oplus Mv$.

\medskip
\medskip 
 Conversely, if $M=\displaystyle \sum_{v \in V} Mv $ then for all $m \in M$ we have $\displaystyle m= \sum_{i=1}^n m_i$ where $m_i \in Mv_i$ for some $v_1, \> v_2, \> \cdots , \> v_n$ by (iii). Since $m_i=m_iv_i$ and $v_i \in L_{\bf k}(\Gamma)$ we get $\displaystyle m=\sum m_iv_i$, so $M= ML_{\bf k}(\Gamma)$, that is, $M$ is unital.

\medskip
\medskip
 
 (ii) $\Leftrightarrow $ (iii): Assuming (ii), if $m \in M$ then $\displaystyle \sum_{v \in V} mv$ is actually a finite sum. Also $(m-\sum mv)u=mu-mu= 0$ by the relations (V) for all $u \in V$. Hence $V_{m-\sum mv}=\emptyset$ and 
 so $m-\sum mv=0$ by (ii).  Thus $m \in \sum Mv$.

\medskip
\medskip

Conversely, if  $\displaystyle M=\sum Mv$ then $\displaystyle m=\sum_{i=1}^n m_i v_i$ as above, hence $V_m \subseteq \{v_1, \> v_2, \>  \cdots , v_n\}$, thus finite. Also, if $mv=0$ for all $v \in V$ then $m v_i = m_i v_i =0 $ for $i=1,2, \cdots,n$ and hence $m=0$. So, if $0\neq m $ then $V_m \neq \emptyset$.

\medskip
\medskip 
 (iv) $\Rightarrow$ (i): The image of $M \otimes L_{\bf k}(\Gamma) \longrightarrow M$ is $ML_{\bf k}(\Gamma)$, so  $M=ML_{\bf k}(\Gamma)$, that is, $M$ is unital when (iv) holds.

\medskip
\medskip
(ii) $\Rightarrow$ (iv): if (ii) holds then (iii) holds as shown above,  so
 $\displaystyle m \mapsto \sum_{v \in V} (mv \otimes v)$ is the inverse of $M \otimes L_{\bf k}(\Gamma) \longrightarrow M$. 
  \end{proof}
  
  \begin{theorem}
      \label{unital2}
  If $\Gamma$ is a row-finite digraph and ${\bf k}$ is a commutative ring with 1 then the full subcategory of unital $L_{\bf k}(\Gamma)$-modules is closed under colimits and it is a Serre subcategory of all $L_{\bf k}(\Gamma)$-modules (not necessarily unital).
  \end{theorem}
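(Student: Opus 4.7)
The plan is to exploit the three equivalent characterizations of unital modules from Lemma \ref{unital}, picking whichever is most convenient for each closure property. The key workhorse will be condition (iii), $M = \bigoplus_{v \in V} Mv$, since this decomposition behaves well under the standard module-theoretic constructions.

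First I would dispose of direct sums and quotients. For a family $\{M_\alpha\}$ of unital modules, $(\bigoplus_\alpha M_\alpha)v = \bigoplus_\alpha (M_\alpha v)$ for every $v$, so summing over $v$ shows the direct sum satisfies (iii). For quotients, if $M = \sum_v Mv$ and $N \subseteq M$ is a submodule, then $M/N = \sum_v (M/N)v$ trivially. Since every colimit in $Mod_{L_{\bf k}(\Gamma)}$ is computed as a cokernel of a map between coproducts, the colimit of any diagram of unital modules is unital; in particular the inclusion of the unital subcategory into $Mod_{L_{\bf k}(\Gamma)}$ creates colimits.

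For the Serre property I still owe closure under submodules and extensions. Submodules are handled most cleanly by characterization (ii): if $N \subseteq M$ with $M$ unital and $0 \ne n \in N$, then $V_n = \{v \in V : nv \ne 0\}$ is the same set whether computed in $N$ or in $M$, hence nonempty and finite, so (ii) holds for $N$. For extensions, take an exact sequence $0 \to M' \to M \to M'' \to 0$ with $M'$ and $M''$ unital, and let $m \in M$. Using (iii) in $M''$, write the image $\bar m \in M''$ as a finite sum $\bar m = \sum_{i} \bar m_i$ with $\bar m_i \in M'' v_i$, choose any lifts $m_i \in M$, and replace $m_i$ by $m_i v_i$ so that $m_i \in Mv_i$ while still lifting $\bar m_i$. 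Then $m - \sum_i m_i$ maps to zero in $M''$, hence lies in $M'$, and because $M'$ is unital it is itself a finite sum of elements of $\bigoplus_u M'u \subseteq \bigoplus_u Mu$. Therefore $m \in \sum_v Mv$, establishing (iii) for $M$.

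No step is a genuine obstacle; the only one that requires even a small idea is the extension argument, where the replacement $m_i \mapsto m_i v_i$ removes the ambiguity in lifting from $M''$ to $M$ and lets the $Mv$-decompositions of $M'$ and $M''$ combine into one for $M$. Row-finiteness of $\Gamma$ plays no direct role here beyond its use in Lemma \ref{unital}, and the commutative coefficient ring ${\bf k}$ is invisible to the argument.
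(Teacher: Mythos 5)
Your proposal is correct and follows essentially the same route as the paper: both arguments run through the equivalent characterizations of Lemma \ref{unital} (using $M=\bigoplus_v Mv$ for direct sums and quotients, the finiteness of $V_m$ for submodules) and then obtain general colimits as quotients of direct sums. The only cosmetic difference is the extension step, where you lift the decomposition of $M''$ and splice it with that of $M'$ via characterization (iii), whereas the paper verifies characterization (ii) directly by writing $V_m = V_{m+A}\cup V_a$; both are sound.
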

  \begin{proof}
  Clearly, the 0 module is unital. If $M$ is unital, i.e., $M=MR$ then any quotient $N$ of $M$ also satisfies $N=NR$. 
  Using Lemma \ref{unital}(ii) we see that unital modules are closed under taking submodules. Being unital is clearly invariant under isomorphisms. If $A \hookrightarrow M \longrightarrow M/A$ is a short exact sequence of $L_{\bf k}(\Gamma)$-modules with $A$ and $M/A$ unital then $V_m=V_{m+A} \cup V_{a}$ with $a=m-\sum mv$ where the sum is over $v \in V_{m+A}$ for all $m \in M$. Since $m-\sum mv $ is in $A$ and for $m\neq 0$ at least one of $V_{m+A}$ or $V_a$ is nonempty, $M$ is unital by Lemma \ref{unital}(ii). This proves that the subcategory of unital $L_{\bf k}(\Gamma)$-modules is a Serre subcategory.

\medskip
\medskip  
  
  If $M=\bigoplus M_i$ with $M_i$ unital for all $i$ then $\displaystyle M_i= \bigoplus_{v \in V} M_iv$ for each $i$ by Lemma \ref{unital}(iii). Also $Mv=\{ m \in M \> \vert \>  mv =m \}=\bigoplus M_iv$. Changing the order of summation we see that $\displaystyle M=\bigoplus_{v \in V} Mv$. Hence $M$ is unital by Lemma \ref{unital}(iii), so an arbitrary direct sum of unital $L_{\bf k}(\Gamma)$-modules is also unital. Since any colimit is a quotient of a direct sum, we are done. 
  \end{proof}

\medskip
\medskip

Since $L_{\bf k}(\Gamma)$ regarded as a free $L_{\bf k}(\Gamma)$-module is unital, projective modules $vL_{\bf k}(\Gamma)$ with $v \in V$ and their direct sums are all unital by Proposition \ref{unital2}, as well as their quotients. However, the category of unital modules is not closed under arbitrary products. For instance, if $\Gamma$ has infinitely many vertices $v_0, v_1, v_2, \cdots $ then $m \in M=L_{\bf k}(\Gamma)^{\mathbb{N}}$ with $m(i)=v_i$ for $i \in \mathbb{N}$ violates Lemma \ref{unital}(ii), hence $M$ is not unital. 
 
\medskip
\medskip
{\bf From now on all modules will be unital and $Mod_{L_{\bf k}(\Gamma)}$ will denote the category of unital right $L_{\bf k}(\Gamma)$-modules. }

\medskip
\medskip
In the dictionary between a unital $L_{\bf k}(\Gamma)$-module $M$ and a quiver representation $\rho$ of $\Gamma$ satisfying condition (Iso), the ${\bf k}$-module $\rho(v)$ corresponds to $Mv=\{ m \in M \> \vert \> mv=m \}$ for each vertex $v$. So $\displaystyle M\cong \bigoplus_{v \in V} Mv$ as a ${\bf k}$-module.

\medskip
\medskip

Immediate consequences of Proposition \ref{teorem} and Example \ref{nonzero p} are:

\begin{proposition} \label{pp^*}

(i) Let $M$ be a unital $L_{\bf k}(\Gamma)$-module. For every path 
$p$ in $Path(\Gamma)$ the ${\bf k}$-module homomorphism $Msp \stackrel{-p}{\longrightarrow} Mtp$ defined by right multiplication with $p$ is onto and the ${\bf k}$-module homomorphism $Mtp \stackrel{- p^*}{\longrightarrow } Msp$ is one-to-one.

\medskip
\medskip

(ii) For any two paths $p$ and $q$ in $Path (\Gamma)$ with $tp=tq$ and $\lambda \in {\bf k}$, 
 $\lambda pq^*=0$ in $L_{\bf k}(\Gamma)$ if and only if $\lambda=0$.

\medskip
\medskip 
 
 (iii) If $w$ is a sink then $wL_{\bf k}(\Gamma)w ={\bf k}w \cong {\bf k}$.
 \end{proposition}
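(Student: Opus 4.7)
The plan is to derive all three parts from the quiver-representation dictionary of Proposition \ref{teorem}, using Example \ref{nonzero p} to produce a module large enough to detect nonzero elements. Since a unital $L_{\bf k}(\Gamma)$-module $M$ corresponds to a representation $\rho$ with $Mv = \rho(v)$, the right action of an arrow $e$ on $Mse$ is $\rho(e) : \rho(se) \to \rho(te)$, and the right action of $e^*$ on $Mte$ is the composition of the inclusion $\rho(te) \hookrightarrow \bigoplus_{sf=se} \rho(tf)$ with the inverse of the (Iso) isomorphism. In particular, $\rho(e)$ is the composition of the (Iso) isomorphism with the projection onto the $e$-summand, hence surjective; and the action of $e^*$, being a composition of an inclusion with an isomorphism, is injective.

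For (i), I extend this to an arbitrary path $p = e_1 \cdots e_n$: right multiplication by $p$ factors as $Msp = Mse_1 \to Mte_1 = Mse_2 \to \cdots \to Mte_n = Mtp$, each factor surjective, so the composition is surjective. Likewise right multiplication by $p^*$ is a composition of injections, hence injective. (Alternatively, the same conclusions follow directly from the identity $p^*p = tp$ obtained by iterating (CK1), since $m \in Mtp$ gives $m = m(p^*p) = (mp^*)p$, making $-\cdot p$ surjective and $-\cdot p^*$ injective.)

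For (ii), I instantiate Example \ref{nonzero p} with $\rho(v) = {\bf k}\mathbb{Z}$ for every $v \in V$; the required isomorphisms $\varphi_v$ at each non-sink exist because $\mathbb{Z}$ admits a bijection with $\mathbb{Z}^n$ for every finite $n \geq 1$. Let $\{\varepsilon_i\}_{i \in \mathbb{Z}}$ be the standard ${\bf k}$-basis of $Mtp = {\bf k}\mathbb{Z}$, and set $m := \varepsilon_0 \cdot p^* \in Msp$. Then $mp = \varepsilon_0 (p^*p) = \varepsilon_0 \cdot tp = \varepsilon_0$, so $m \cdot (\lambda p q^*) = (\lambda \varepsilon_0) q^*$. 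By part (i) the right action of $q^*$ is injective on $Mtq = Mtp$, so this vanishes iff $\lambda \varepsilon_0 = 0$, and this in turn vanishes iff $\lambda = 0$ because $\varepsilon_0$ is a basis element of a free ${\bf k}$-module. Hence $\lambda p q^* \neq 0$ in $L_{\bf k}(\Gamma)$ whenever $\lambda \neq 0$.

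For (iii), Fact \ref{fact}(ii) gives that $L_{\bf k}(\Gamma)$ is spanned by elements $p q^*$ with $tp = tq$, so $wL_{\bf k}(\Gamma)w$ is spanned by those with the additional condition $sp = w = sq$; since $w$ is a sink, the only such paths are $p = q = w$, giving $wL_{\bf k}(\Gamma)w = {\bf k}w$. The ${\bf k}$-module surjection ${\bf k} \twoheadrightarrow {\bf k}w$, $\lambda \mapsto \lambda w$, is injective by (ii) applied with $p = q = w$, so ${\bf k}w \cong {\bf k}$. The main subtlety is (ii): I have to be sure the detecting module has $Mv$ free over ${\bf k}$ at the terminal vertex, because over a commutative ring with zero-divisors one cannot generally pass from "$x \neq 0$" to "$\lambda x \neq 0$"; the uniform choice $\rho(v) = {\bf k}\mathbb{Z}$ (even at sinks, where (Iso) is vacuous) provides such a basis element and makes the argument go through for arbitrary ${\bf k}$.
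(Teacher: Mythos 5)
Your proof is correct and follows essentially the same route as the paper's: part (i) from the (Iso) condition of Proposition \ref{teorem}, part (ii) by detecting $\lambda pq^*$ on the free-rank module of Example \ref{nonzero p}, and part (iii) from Fact \ref{fact}(ii) plus part (ii). Your treatment of (ii) is just a more explicit version of the paper's (tracking the basis element $\varepsilon_0$ rather than citing the image being free of infinite rank), and your remark about needing freeness of $\rho(v)$ over a ring with zero-divisors correctly identifies why that module is the right detector.
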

 
\begin{proof}
 (i) By the condition (Iso) in Proposition \ref{teorem}, $Mse \stackrel{- e}{\longrightarrow} Mte $ is onto for all $e \in E$, hence $Msp \stackrel{- p}{\longrightarrow} Mtp $ is also onto. Similarly, $Mte \stackrel{- e^*}{\longrightarrow} Mse $ is one-to-one, so $Mtp \stackrel{- p^*}{\longrightarrow} Msp $ is also one-to-one.

\medskip
\medskip 
 
(ii) Given an $L_{\bf k}(\Gamma)$-module $M$ as in the Example \ref{nonzero p} above right multiplication by $pq^*$ with $tp=tq$ defines a ${\bf k}$-module homomorphism from $Msp$ to $Msq$ whose image is ${\bf k}\mathbb{Z}$, a free ${\bf k}$-module of infinite rank by (i). Hence $\lambda pq^*=0$ if and only if $\lambda=0$.

\medskip
\medskip
(iii) Since $w$ is a sink and $wL_{\bf k}(\Gamma)w$  is spanned by $pq^*$ with $sp=w=sq$, we see that $p=q=w=pq^*$. Hence $wL_{\bf k}(\Gamma)w ={\bf k}w \cong {\bf k}$ by (ii) above.
\end{proof}

 \begin{notation} \label{not}
 When $C$ is a cycle, $P_C:=\{ p 
\in Path(\Gamma) \> \vert \> tp=sC \textit{ and } p\neq qC \}$.
Let $V_p$ denote the set of all vertices on the path $p$. If $H\subseteq V$ is hereditary then $P_H:=\{p \in Path (\Gamma)\> \vert \> V_p \cap  H = \{ tp\} \}$ and $P_w:= P_{\{w\}}$ if $w$ is a sink. Also, $P_w^u :=\{ p\in P_w \> \vert \> sp=u \}$, 
$\> P_C^u:=\{p \in P_C\> \vert \> sp=u \}$ and $P_H^u:=\{ p \in P_H\> \vert \> sp=u \}$.
\end{notation}

When ${\bf k}$ is a field, part (i) of the following proposition is in \cite[Lemma 1.6]{goo09},  part (ii) is in  \cite[Lemma 2.2.7]{aam17}, part (iv) is in \cite[Theorem 2.6.17]{aam17} and part (vi) is in \cite[Lemma 2.7.1]{aam17}.

\begin{proposition} \label{Path}
(i) The homomorphism from ${\bf k}\Gamma$ to $L_{\bf k}(\Gamma)$ sending every vertex and every arrow to itself is one-to-one. Thus we may view ${\bf k}\Gamma$ as a subalgebra of $L_{\bf k}(\Gamma)$.

\medskip
\medskip

(ii) If $C$ is a cycle with no exit then $vL_{{\bf k}}(\Gamma)v \cong {\bf k}[x,x^{-1}]$ where $v=sC$ and $x \leftrightarrow C^*$.

\medskip
\medskip

(iii) If $w\in V$ is a sink then the $L_{\bf k}(\Gamma)$-module $wL_{\bf k}(\Gamma)$ is free as a ${\bf k}$-module with basis 
$\{p^* \> \vert \> p \in P_w \}$. Moreover, $wL_{\bf k}(\Gamma)$ is a simple $L_{\bf k}(\Gamma)$-module if  and only if ${\bf k}$ is a field.

\medskip
\medskip
(iv) If $w\in V$ is a sink then  
$\{pq^* \> \vert \> p,\> q \in P_w  \}$ is a ${\bf k}$-basis  for the ideal $(w)$ generated by $w$ and $(w)$  is isomorphic to the algebra $M_{P_w}({\bf k})$ of matrices with rows and columns indexed by $P_w$, having finitely many nonzero entries (from ${\bf k}$).

\medskip
\medskip

(v) If $C$ is a cycle with no exit then  $sCL_{\bf k}(\Gamma)$ is a $({\bf k}[x,x^{-1}] ,L_{\bf k}(\Gamma))$-bimodule and it is free as a left ${\bf k}[x,x^{-1}]$-module with basis $ \{p^* \> \vert \> p \in P_C \}$.

\medskip
\medskip
(vi) If $C$ is a cycle with no exit then   
$\{pC^nq^* \> \vert \> n \in \mathbb{Z} \textit{ and } p, q \in P_C \}$ is a ${\bf k}$-basis  for the ideal $(sC)$ generated by $sC$ and $(sC)$  is isomorphic as an algebra to the matrix algebra $M_{P_C}({\bf k}[x,x^{-1}])$ where $C^0:=sC$ and $C^{-n}:=(C^*)^n$ for $n>0$.
\end{proposition}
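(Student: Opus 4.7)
The strategy is threefold: show that $\{p C^n q^* : p, q \in P_C,\; n \in \mathbb{Z}\}$ spans $(sC)$, deduce linear independence from part (v), and then match the multiplicative structure with that of matrix units over ${\bf k}[x, x^{-1}]$.

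For the spanning step I would first observe that $V_C$ is hereditary (no exit means every arrow leaving a vertex on $C$ is on $C$), so by Lemma \ref{kesisim}(i), $(sC) = (V_C)$ is spanned by elements $\alpha v \beta^*$ with $v \in V_C$ and $t\alpha = v = t\beta$. For $v = s(e_i)$ on $C = e_1 \cdots e_n$, (CK2) collapses at $v$ to the single summand $v = e_i e_i^*$, and iterating this absorption gives $\alpha v \beta^* = (\alpha e_i \cdots e_n)(\beta e_i \cdots e_n)^*$, reducing to the case $t\alpha = sC = t\beta$. Any such $\alpha$ then factors uniquely as $\alpha = p C^k$ with $p \in P_C$ and $k \geq 0$ (take $k$ maximal), and similarly $\beta = q C^m$. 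Since $CC^* = sC = C^*C$ by Fact \ref{fact}(iii), a direct simplification yields $\alpha \beta^* = p C^{k-m} q^*$ under the convention $C^{-j} := (C^*)^j$, establishing the spanning claim.

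For linear independence the key combinatorial step is $r^* p = \delta_{r,p}\, sC$ for $r, p \in P_C$: by Fact \ref{fact}(i), if $r^* p \neq 0$ then WLOG $r = p\gamma$, and $\gamma$ is a closed walk at $sC$, which since $C$ has no exit must be $C^j$ for some $j \geq 0$; but $j \geq 1$ would give $r = (pC^{j-1})C$, contradicting $r \in P_C$, so $r = p$. Left-multiplying a putative relation $\sum a_{p,q,n}\, p C^n q^* = 0$ by $r^*$ collapses it to $\sum_{q,n} a_{r,q,n}\, C^n q^* = 0$ in $sC L_{\bf k}(\Gamma)$. By part (v) together with the identification $x \leftrightarrow C^*$ from (ii), the set $\{C^n q^* : n \in \mathbb{Z},\; q \in P_C\}$ is a ${\bf k}$-basis of $sC L_{\bf k}(\Gamma)$, forcing every $a_{r,q,n} = 0$.

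Finally, I would define $\phi \colon M_{P_C}({\bf k}[x,x^{-1}]) \to (sC)$ on matrix units by $\phi(x^n E_{p,q}) := p C^{-n} q^*$ and extend ${\bf k}$-linearly; spanning and independence combine to show $\phi$ is a ${\bf k}$-linear bijection, and the product rule
\[
(p C^n q^*)(p' C^m q'^*) \;=\; \delta_{q, p'}\, p\, C^{n+m}\, q'^*,
\]
obtained from $q^* p' = \delta_{q, p'}\, sC$ and $C^n\, sC\, C^m = C^{n+m}$, exactly matches the matrix-unit multiplication, so $\phi$ is an algebra isomorphism. The principal obstacle is the combinatorial incomparability lemma $r^* p = \delta_{r,p}\, sC$ on $P_C$: the no-exit hypothesis is essential here, as it is what forces every closed walk at $sC$ to be a power of $C$, and these powers are precisely what the definition of $P_C$ excludes.
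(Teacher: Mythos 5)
Your proposal addresses only part (vi) of the proposition, and it does so by explicitly invoking parts (ii) and (v) — which are themselves among the statements to be proved — as well as part (i) implicitly (linear independence of paths in $L_{\bf k}(\Gamma)$ underlies everything here, and is the one claim that genuinely requires a new idea, namely the universal $F_E$-grading together with Proposition \ref{pp^*}(ii), or some substitute for it). Nothing in your write-up establishes (i)--(v): the embedding ${\bf k}\Gamma \hookrightarrow L_{\bf k}(\Gamma)$, the corner computation $sC\,L\,sC \cong {\bf k}[x,x^{-1}]$, the freeness of $wL$ and of $sC\,L$ with the stated bases, the simplicity criterion for $wL$, and the matrix description of $(w)$. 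This is a genuine gap: five of the six claims are missing, and the sixth is proved conditionally on two of them.

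That said, your argument for (vi) itself is correct and follows essentially the same route as the paper: spanning via Lemma \ref{kesisim}(i) and the hereditarity of $V_C$, absorption of $(CK2)$ along the exit-free cycle to reduce to $tp = sC = tq$, the factorization $\alpha = pC^k$ with $p \in P_C$, and linear independence deduced from the basis in (v). Your treatment of independence is slightly more explicit than the paper's: you prove the incomparability identity $r^*p = \delta_{r,p}\,sC$ on $P_C$ and left-multiply the putative relation by $r^*$ to collapse it, whereas the paper lets $pC^nq^*$ act on $sC L_{\bf k}(\Gamma) \cong {\bf k}[x,x^{-1}]P_C$ as the matrix unit $x^nE_{pq}$ and reads off both independence and the algebra isomorphism at once. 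Your version of this step is fine (and the same identity $q^*p' = \delta_{q,p'}\,sC$ gives you the product rule), but to make the submission complete you would need to supply proofs of (i)--(v), in particular the grading argument for (i) on which the linear independence of $Path(\Gamma)$ in $L_{\bf k}(\Gamma)$ rests.
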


\begin{proof}
(i) This is a graded homomorphism with respect to the universal grading by the free group on $E$ as described above. Hence the kernel is a graded ideal. Every homogeneous element of ${\bf k}\Gamma$ is of the form $\lambda p$ with $\lambda \in{\bf k}$ and $p \in Path (\Gamma)$. Since $\lambda p=0$ in $L_{\bf k}(\Gamma)$ only if $\lambda =0$ by Proposition \ref{pp^*}(ii), the claim follows.

\medskip
\medskip

(ii) Note that $v$ is the multiplicative identity of the corner algebra $vL_{\bf k}(\Gamma)v$. Paths $p$, $q$ in $ \Gamma$ with $sp=v=sq$ can not leaves $C$ since $C$ has no exit. So, if $tp=tq$ and $p$ and $q$ have positive lengths we have $p=p_1e$ and $q=q_1e$ for some arrow $e$ on $C$. Thus $pq^*=p_1q_1^*$ since $se=ee^*$ for all $e$ on $C$ by (CK2). Repeating this we see that $vL_{\bf k}(\Gamma)v$ is spanned by $C^n$ where $C^0:=v$ and $C^{-1}:= C^*$ by Fact \ref{fact}(iii). We define an epimorphism from ${\bf k}[x,x^{-1}]$ to $vL_{\bf k}(\Gamma)v$ sending $x$ to $C^*$.

\medskip
\medskip

This epimorphism is one-to-one because $\{C^n\> \vert \> n \in \mathbb{Z}\}$ is linearly independent: A finite subset of  $\{C^n\> \vert \> n \in \mathbb{Z}\}$ is mapped to a set of distinct paths in $\Gamma$ after multiplying by a sufficiently high power of $C$, which are linearly independent by (i).

\medskip
\medskip

(iii) Since $w$ is a sink, $wL_{{\bf k}}(\Gamma)$ is spanned by  $\{p^* \> \vert \> p \in P_w \}$. Applying the anti-automorphism $*$ to this set we get a ${
\bf k}$-linearly independent set by (i). Hence  $wL_{\bf k}(\Gamma)$ is a free ${\bf k}$-module with basis $P_w^*$.

\medskip
\medskip
If ${\bf k}$ is not a field then $\mathfrak{m}wL_{\bf k}(\Gamma)$ is a non-zero proper submodule of $wL_{\bf k}(\Gamma)$ where $\mathfrak{m}$ is a maximal ideal of ${\bf k}$, hence $wL_{\bf k}(\Gamma)$ is not simple.
If ${\bf k}$ is a field and $0\neq M$ is a submodule of $wL$ then  there is an $m=\sum \lambda_i p_i^* \in M$ with $\lambda_1\neq 0$ and $p_i$ distinct with $tp_i=w$. Now $m(\frac{1}{\lambda_1}p_1)=w $ by Fact  \ref{fact}(i) and $w$ generates $wL_{{\bf k}}(\Gamma)$. Hence $M=wL_{{\bf k}}(\Gamma)$ showing that $wL_{{\bf k}}(\Gamma)$ is simple.

\medskip
\medskip

(iv) Since $\{w\}$ is hereditary, 
$\{pq^*=pwq^* \> \vert \> p,\> q \in P_w  \}$ spans $(w)$ by Lemma \ref{kesisim}(i). As a ${\bf k}$-module $wL_{\bf k}(\Gamma) \cong {\bf k}P_w^*$ by (iii) above. The elements 
$pq^* \in (w)$ with $ p, q \in P_w$ act on $wL_{\bf k}(\Gamma)$ as elementary matrices  $E_{pq} \in M_{P_w}({\bf k})$. Hence  $\{pq^*\> \vert \> p,\> q \in P_w  \}$ is linearly independent over ${\bf k}$ and $(w)\cong M_{P_w}({\bf k})$ as an algebra.

\medskip
\medskip

(v) $sCL_{\bf k}(\Gamma)$ 
is a $(sCL_{\bf k}(\Gamma)sC ,L_{\bf k}(\Gamma))$-bimodule and $sCL_{\bf k}(\Gamma)sC \cong {\bf k}[x,x^{-1}]$ by (ii) above. 
If $pq^* \in L_{\bf k}(\Gamma)$ with $sp=sC$ and $tp=tq$ then $tp \in V_C$ since $C$ has no exit. Such $pq^*$ spans $sCL_{\bf k}(\Gamma)$. By repeated applications of (CK2) as needed we may assume that $tp=sC=tq$ (again since $C$ has no exit). Using $CC^*=sC$ we may now express such $pq^*$ as $C^nr^*$ with $r \in P_C$ and $n \in \mathbb{Z}$. Hence 
$\{C^np^* \> \vert \> n \in \mathbb{Z}\textit{ and } \> p \in P_C \}$ spans $sCL_{\bf k}(\Gamma)$. Thus any element in $sCL_{\bf k}(\Gamma)$ can be expressed as $\sum_{i=1}^m \alpha_i p_i^*$ with $\alpha_i \in sCL_{\bf k}(\Gamma)sC \cong {\bf k}[x,x^{-1}]$ and $p_i$ distinct elements of $P_C$. We can recover $\alpha_j$  as $(\sum_{i=1}^m \alpha_i p_i^*)p_j$ for all $j$
by Fact \ref{fact}(i) 
since distinct $p_i$ can not be initial segment of one another. Therefore $sCL_{\bf k}(\Gamma)$ is a free ${\bf k}[x,x^{-1}]$-module with basis $\{p^* \> \vert \> p \in P_C \}$.

\medskip
\medskip

(vi) Since $(sC)=(V_C)$ and $V_C$ is hereditary (because  
$C$ has no exit), 
$\{p_1q_1^* \> \vert \> p_1,\> q_1 \in P_C ,\> tp_1=tq_1 \in V_C \}$ spans $(sC)$ by Lemma \ref{kesisim}(i). As in the proof of (v) above, such $p_1q_1^*=pq^*$ with $tp=sC=tq$ and  $(sC)$ is spanned over ${\bf k}$ by $\{pC^nq^* \>\vert \> n\in \mathbb{Z} \textit{ and } p,q\in P_C\}$.
 By (v) above,  $sCL_{\bf k}(\Gamma) \cong {\bf k}[x,x^{-1}]P_C$.   
 As a right $(sC)$-module, we see that the elements 
$pC^nq^*$ act on $sCL_{\bf k}(\Gamma)$  as $x^nE_{pq} \in M_{P_C}({\bf k}[x,x^{-1}])$
 via the isomorphism 
 $sCL_{\bf k}(\Gamma)\cong {\bf k}[x,x^{-1}]P_C$ 
  for all $n \in \mathbb{Z}$ and $ p, q \in P_C$.
 Hence  $\{pC^nq^*\> \vert \> n \in \mathbb{Z} \textit{ and } \> p,\> q \in P_C  \}$ is linearly independent over ${\bf k}$ and $(sC)\cong M_{P_C}({\bf k}[x,x^{-1}])$ as an algebra. 
\end{proof}

\medskip
\medskip
The special case of the following proposition when ${\bf k}$ is  a field and $\Gamma$ is finite is given as Theorems 1.8 and 3.8 in \cite{aam08}.

\begin{proposition} \label{aas}
If ${\bf k}$ is a commutative ring with 1 and $\Gamma$ is a row-finite digraph such that every infinite path is of the form $pC^{\infty}$ where $C$ is a cycle with no exit then $$L_{\bf k}(\Gamma)= \left(\bigoplus_{w \> sink} (w)\right)\> \bigoplus \> \left(\bigoplus_{C \> cycle} (sC)\right).$$
Also $(w) \cong M_{P_w}({\bf k})$ and  $(sC) \cong M_{P_C}({\bf k}[x,x^{-1}])$.
    
\end{proposition}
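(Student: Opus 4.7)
The plan has two parts: identify the matrix-algebra summands, then establish that they assemble into a direct sum filling out $L_{\bf k}(\Gamma)$. As a preliminary, I would first note that under the hypothesis every cycle in $\Gamma$ is automatically no-exit: if $D$ had an exit, the infinite path $D^{\infty}$ would, by assumption, have the form $pC^{\infty}$ for some no-exit $C$, and comparing tails together with simplicity of cycles forces $C=D$, contradicting that $D$ has an exit. The matrix-algebra identifications $(w) \cong M_{P_w}({\bf k})$ and $(sC) \cong M_{P_C}({\bf k}[x,x^{-1}])$ then follow directly from Proposition \ref{Path}(iv) and (vi).

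To show the ideals sum to $L_{\bf k}(\Gamma)$, I would put $X := \{w \in V : w \text{ is a sink}\} \cup \bigcup_C V_C$ and apply Fact \ref{H&S}: a vertex $v$ lies in the hereditary saturated closure $\overline{X}$ exactly when every path from $v$ to a sink, and every infinite path starting at $v$, meets a successor of some element of $X$. The first condition is automatic (sinks are in $X$), and the second is the hypothesis itself, since every infinite path $pC^{\infty}$ passes through $V_C \subseteq X$. Hence $\overline{X} = V$, so Proposition \ref{hereditary}(iii) yields $(X) \cap V = V$, putting every vertex inside $(X)$. Since any $v \in V_C$ satisfies $sC \leadsto v \leadsto sC$, one has $(v) = (sC)$, and consequently $(X) = \sum_w (w) + \sum_C (sC)$. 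As $V$ generates $L_{\bf k}(\Gamma)$ as an ideal (every spanning element $pq^*$ equals $p \cdot tp \cdot q^*$), this sum exhausts $L_{\bf k}(\Gamma)$.

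For directness, the key observation is that the generating hereditary sets $\{w\}$ and $V_C$ are pairwise disjoint: distinct no-exit cycles share no vertex, since a common vertex would have its unique outgoing arrow on both cycles, propagating to equality of the cycles; and sinks are trivially disjoint from one another and from every $V_C$. Lemma \ref{kesisim}(iii) then yields pairwise orthogonality $(H)(H') = (H \cap H') = 0$ of distinct summand ideals. From this I would conclude directness by invoking local units: each summand, being a matrix algebra of finitely-supported matrices, admits local units of the form $e = \sum_k p_k p_k^*$. Given a putative relation $\sum_i a_i + \sum_j b_j = 0$ among elements of distinct summands, I would pick a local unit $e$ in the summand containing a chosen $a_{i_0}$ with $e a_{i_0} e = a_{i_0}$, sandwich the relation by $e$ on both sides, and watch orthogonality annihilate every cross-term to force $a_{i_0} = 0$. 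The main obstacle I expect is exactly this last step: pairwise orthogonality of two-sided ideals only gives $(I \cap J)^2 = 0$, not $I \cap J = 0$, so the argument genuinely needs the local-unit structure supplied by the matrix-algebra descriptions rather than any general nilpotence or semiprimeness trick.
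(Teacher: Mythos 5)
Your proof is correct, but it reaches both halves of the conclusion by a route genuinely different from the paper's. For the spanning statement the paper argues directly: it rewrites each $pq^*$ as $p\bigl(\sum_{se=tp}ee^*\bigr)q^*$ via (CK2) whenever $tp$ is neither a sink nor the source of a cycle, and observes that this rewriting terminates by the hypothesis on infinite paths, landing every spanning element inside some $(w)$ or $(sC)$. You instead package the same termination phenomenon into the closure machinery: Fact \ref{H&S} plus Proposition \ref{hereditary}(iii) give $\overline{X}=V$ for $X$ the sinks together with the cycle vertices, hence $(X)=L_{\bf k}(\Gamma)$. This is cleaner to state but leans on Fact \ref{H&S}, which the paper records without proof and whose justification is essentially the same (CK2)-termination argument. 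For directness the paper simply notes that the explicit ${\bf k}$-bases of $(w)$ and $(sC)$ from Proposition \ref{Path}(iv),(vi) remain jointly linearly independent by Fact \ref{fact}(i); you instead derive pairwise orthogonality $(H)(H')=0$ from Lemma \ref{kesisim}(iii) and then, correctly recognizing that orthogonality of two-sided ideals alone does not force trivial intersections, use the local units $e=\sum_k p_kp_k^*$ supplied by the finitely-supported matrix structure to kill cross-terms and isolate each summand. Both directness arguments are valid; yours trades the basis bookkeeping for the local-unit observation, while the paper's has the advantage that the bases it exhibits are exactly the ones needed for the matrix-algebra identifications anyway.
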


\begin{proof}
From the hypothesis on the infinite paths it follows that the cycles in $\Gamma$ have no exit. We may replace $pq^*=p(tp)q^*$ in $L_{\bf k}(\Gamma)$ with $p (\sum_{se=tp} ee^*)q^*$ if $tp$ is neither a sink nor an $sC$ for some cycle $C$, using (CK2). We repeat this as many times as needed (this is a finite process by our hypothesis on infinite paths). Therefore  
$L_{\bf k}(\Gamma)$ is the sum of the ideals
generated by a sink $w$ or by $sC$ for some cycle $C$ by Fact \ref{fact}(ii). Using the bases in Proposition \ref{Path}(iv) and /or (vi) and Fact \ref{fact}(i) we see that this sum is direct.
\end{proof}

   \medskip
\section{The Reduction Algorithm } \label{RA}
\medskip

The following Proposition is useful for showing that certain epimorphisms are isomorphisms. Below ${\bf k}\Gamma^*$ denotes the path algebra of $\Gamma^*:= (V, E^*)$.

\begin{proposition} \label{bire-bir}
(i) If $I$ and $J$ are right (respectively, left) ideals of $L_{\bf k}(\Gamma)$ then $I=J$ if and only if $I\cap {\bf k}\Gamma= J \cap  {\bf k}\Gamma $ (respectively, $I \cap {\bf k}\Gamma^*= J \cap  {\bf k}\Gamma^*$). In particular, the right (respectively, left) ideal $I=0$ if and only if  $I\cap {\bf k}\Gamma =0$ (respectively, $I\cap {\bf k}\Gamma^* =0$). 

\medskip
\noindent
(ii) A ${\bf k}$-algebra homomorphism $\varphi$ from $L_{\bf k}(\Gamma)$ to a ${\bf k}$-algebra is one-to-one if and only if the restriction of $\varphi$ to ${\bf k}\Gamma$ is one-to-one.  If $\varphi$ is a $\mathbb{Z}$-graded homomorphism then $\varphi$ is one-to-one if and only if the restriction of $\varphi$  to ${\bf k}v$ is one-to-one for all $v \in V$.

\medskip
\noindent
(iii) The map $\underline{\>\> \>} \cap {\bf k}\Gamma$ from the ideal lattice (respectively, the right ideal lattice) of $L_{\bf k}(\Gamma)$ to the ideal lattice (respectively, the right ideal lattice) of ${\bf k}\Gamma$ is a lattice monomorphism. Similarly, $\underline{\>\> \>} \cap {\bf k}\Gamma^*$ from the left ideal lattice of $L_{\bf k}(\Gamma)$ to the left ideal lattice of $\bf k\Gamma^*$ is a lattice monomorphism.
\end{proposition}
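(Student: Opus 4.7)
My plan is to reduce parts (ii) and (iii) to the following strengthened form of (i): for every right ideal $I$ of $L := L_{\bf k}(\Gamma)$, one has $I = (I\cap{\bf k}\Gamma)\cdot L$. The left-ideal versions then follow from the right-ideal versions by applying the $*$-involution, which is an anti-isomorphism carrying right ideals to left ideals and ${\bf k}\Gamma$ onto ${\bf k}\Gamma^*$. For (ii), the kernel of a ${\bf k}$-algebra homomorphism is a two-sided (hence right) ideal, so injectivity on ${\bf k}\Gamma$ is equivalent to $\ker\varphi\cap{\bf k}\Gamma = 0$, which by the strengthened form forces $\ker\varphi = 0$; the $\mathbb{Z}$-graded statement is the Graded Uniqueness Theorem cited in the introduction. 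For (iii), intersection-preservation and injectivity follow directly from the strengthened form (since $I$ is recovered from $I\cap{\bf k}\Gamma$ via the formula), while sum-preservation $(I+J)\cap{\bf k}\Gamma = (I\cap{\bf k}\Gamma)+(J\cap{\bf k}\Gamma)$ will follow from the companion assertion $KL\cap{\bf k}\Gamma = K$ for every right ideal $K$ of ${\bf k}\Gamma$, proved by a parallel argument.

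To prove the strengthened form, let $x\in I$ and write $x = \sum\lambda_i p_iq_i^*$ via Fact~\ref{fact}(ii) with $tp_i = tq_i$. I induct on the maximum length $n$ of the $q_i$. If $n=0$ then each $q_i$ is a vertex, so $x\in{\bf k}\Gamma\cap I\subseteq (I\cap{\bf k}\Gamma)L$. If $n>0$, pick any $q_j$ of length $n$ and use Fact~\ref{fact}(i) to compute $xq_j = \sum_{q_i\preceq q_j}\lambda_i p_ir_i\in{\bf k}\Gamma$ (where $q_j = q_ir_i$). Since $xq_j\in I$ we get $xq_j\in I\cap{\bf k}\Gamma$ and $(xq_j)q_j^* \in (I\cap{\bf k}\Gamma)L$, so $x' := x - (xq_j)q_j^* \in I$. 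Expanding the factors $r_ir_i^*$ that appear in $(xq_j)q_j^*$ via the CK2 relation at the vertex $tq_i$ rewrites $x'$ in a form where the contribution of $q_j$ has been eliminated, and iteration drives $x$ into $(I\cap{\bf k}\Gamma)L$.

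The main obstacle is the termination of this induction, because CK2 expansion of $r_ir_i^*$ can reintroduce length-$n$ monomials of the form $(p_ie)(q_ie)^*$. I would address this by first placing $x$ in a normal form relative to a choice of distinguished arrow $g_v\in s^{-1}(v)$ at each non-sink $v$: iteratively replace every monomial $(p'g_v)(q'g_v)^*$ by $p'q'^* - \sum_{e\neq g_v}(p'e)(q'e)^*$ (coming from $g_vg_v^* = v - \sum_{e\neq g_v}ee^*$). The reduced monomials form a ${\bf k}$-basis of $L$ by a standard diamond-lemma argument, whose key non-trivial input is the injection ${\bf k}\Gamma\hookrightarrow L$ of Proposition~\ref{Path}(i). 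Within this basis a suitable combinatorial invariant on the reduced support of $x$ — for instance the multiset of $q$'s occurring with nonzero coefficient, ordered lexicographically after fixing a total order on $E$ that places each $g_v$ last among $s^{-1}(v)$ — strictly decreases at each subtract-and-renormalize step, so the induction terminates at the base case.
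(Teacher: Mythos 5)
There are two genuine problems. The more serious one is the ``companion assertion'' $KL\cap{\bf k}\Gamma=K$ for every right ideal $K$ of ${\bf k}\Gamma$, on which your proof of join-preservation in (iii) rests: this is false. Take $\Gamma$ a single loop $e$ at $v$, so ${\bf k}\Gamma={\bf k}[e]$ and $L_{\bf k}(\Gamma)\cong{\bf k}[e,e^{-1}]$, and let $K=e\,{\bf k}[e]$. Then $ee^*=v=1$ gives $KL=L$, so $KL\cap{\bf k}\Gamma={\bf k}[e]\neq K$. (This is exactly the failure of surjectivity of $\underline{\;\;}\cap{\bf k}\Gamma$ that the paper records immediately after the proposition, with $K=(x)\subset{\bf k}[x]$.) Your strengthened form $I=(I\cap{\bf k}\Gamma)L$ is true, but by itself it reduces $(I+J)\cap{\bf k}\Gamma\subseteq I\cap{\bf k}\Gamma+J\cap{\bf k}\Gamma$ to the statement $(K_1+K_2)L\cap{\bf k}\Gamma=K_1+K_2$, which is the same false assertion; so join-preservation is left unproven. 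The paper instead descends on $\beta$ and $\gamma$ \emph{simultaneously}: if $\alpha=\beta+\gamma\notin I\cap{\bf k}\Gamma+J\cap{\bf k}\Gamma$ one finds a single path $p$ with $\beta p\in I\cap{\bf k}\Gamma$, $\gamma p\in J\cap{\bf k}\Gamma$ and $\alpha p$ still outside the sum, a contradiction.

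The second problem is that your proof of the strengthened form is not actually completed. As you note, subtracting $(xq_j)q_j^*$ reintroduces dual tails of maximal length via CK2, and the fix is deferred to ``a suitable combinatorial invariant'' on a diamond-lemma normal form that is named but not verified; that is where all the work lies. The detour is also unnecessary: instead of multiplying by the whole path $q_j$ and subtracting, multiply by a \emph{single arrow}. Writing $x=\sum_w xw$ over the finitely many $w=sq_i$, a sink $w$ gives $xw\in I\cap{\bf k}\Gamma$ directly, and a non-sink $w$ gives $xw=\sum_{se=w}(xe)e^*$ by CK2, where each $xe\in I$ has every surviving dual tail strictly shorter (by CK1, $q_i^*e$ is $0$ or $r_i^*$ with $q_i=er_i$). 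No cancellation occurs, no long monomials reappear, and induction on the maximal length of the $q_i$ closes immediately. This is the paper's argument for (i) (phrased there as locating an element of $(I\cap{\bf k}\Gamma)\setminus(J\cap{\bf k}\Gamma)$ rather than as the identity $I=(I\cap{\bf k}\Gamma)L$), and it makes the normal-form machinery, and Proposition \ref{Path}(i) as an input, unnecessary. Your reductions of (ii) and of the left-ideal statements via $*$ are fine and match the paper; citing the Graded Uniqueness Theorem for the second half of (ii) is legitimate as a literature reference but circular in context, since that statement is precisely what the proposition is offering a new proof of.
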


\begin{proof}
(i) If $I \neq J$ then we may assume that there is an $\displaystyle \alpha = \sum_{i=1}^n \lambda_i p_iq_i^* \in I\setminus J $. Since $\alpha \sum v =\alpha $  where the sum is over $\{ v=sq_i \> \vert \> 1\leq i \leq n \}$ there is a vertex $w$ with $\alpha w \in I \setminus J$. If $w$ is a sink then $\alpha w \in ({\bf k}\Gamma \cap I)\setminus ({\bf k}\Gamma \cap J)$ and we are done. If $w$ is not a sink then $\displaystyle \alpha \sum_{se=w} ee^*=\alpha w \in I\setminus J$, so there is $e\in E$ with $\alpha e \in I\setminus J$. This shortens the $q_i^*$s and, repeating this we end up with an element in $(I\cap {\bf k}\Gamma ) \setminus (J\cap {\bf k}\Gamma )$. Thus $I\cap {\bf k}\Gamma \neq J\cap {\bf k}\Gamma $. If $I \neq J$ are left ideals of $L_{\bf k}(\Gamma)$ then $I^*\neq J^*$ are right ideals and we can apply the discussion above to get 
$I^*\cap {\bf k}\Gamma  \neq J^*\cap {\bf k}\Gamma $, hence 
$I\cap {\bf k}\Gamma^*  =(I^*\cap {\bf k}\Gamma)^* \neq (J^*\cap {\bf k}\Gamma)^*=J\cap {\bf k}\Gamma^* $. 

\medskip
\medskip

(ii) If $\varphi$ is a one-to-one homomorphism from $L_{\bf k}(\Gamma)$ to a ${\bf k}$-algebra then every restriction of $\varphi$ is also one-to-one. Conversely, if $\varphi$ is not one-to-one then $0\neq Ker \varphi$. Hence $Ker \varphi \cap {\bf k}\Gamma \neq 0$ 
and the restriction of $\varphi$ to ${\bf k}\Gamma$ is not one-to-one. If $\varphi$ is a graded homomorphism then $0\neq Ker \varphi$ is a graded ideal, so we can find $0\neq \sum \lambda_i p_i \in Ker \varphi \cap {\bf k}\Gamma$ with $p_i$ distinct, $\lambda_i \neq 0$ for each $i$ and all paths $p_i$ having the same length. 
Now $p_1^*\sum \lambda_ip_i =\lambda_1 sp_1 \in {\bf k}sp_1 \cap Ker \varphi$ because $p_1^*p_i=0$ for $i=2,3, \cdots , n$
by Fact \ref{fact}(i), since $p_i$s have the same length.

\medskip
\medskip
(iii) Let $I$ and $J$ be (right) ideals of $L_{\bf k} (\Gamma)$. By (i) above,  $\underline{\>\> \>} \cap {\bf k}\Gamma$ is one-to-one. Since  $(I \cap {\bf k}\Gamma)\cap (J \cap {\bf k}\Gamma )= (I\cap J) \cap {\bf k}\Gamma$ meet is preserved. Clearly,  $(I\cap {\bf k}\Gamma) +  (J \cap {\bf k}\Gamma)\subseteq (I+J) \cap {\bf k} \Gamma$. If $\alpha \in (I+J)\cap {\bf k}\Gamma$ then $\alpha =\beta +\gamma$ with $\beta \in I$ and $\gamma \in J$. Assuming to the contrary that $(\beta +\gamma) \notin I\cap {\bf k}\Gamma + J\cap {\bf k} \Gamma$, we can find a path $p$ in $\Gamma$ with $(\beta +\gamma)p \notin I\cap {\bf k}\Gamma +J \cap {\bf k}\Gamma$ with $\beta p$ and $\gamma p$ in ${\bf k}\Gamma$, as in the proof of (i) above. Now, $\beta p \in I\cap {\bf k}\Gamma$ and $\gamma p \in J\cap {\bf k}\Gamma$ giving a contradiction. Therefore $(I\cap {\bf k}\Gamma) +  (J \cap {\bf k}\Gamma) = (I+J) \cap {\bf k} \Gamma$, that is, join is also preserved. Hence we have a lattice monomorphism from the (right) ideal lattice of $L_{\bf k} (\Gamma) $ to the (right) ideal lattice of ${\bf k}\Gamma$. The proof for $\underline {\> \> \>}\cap {\bf k}\Gamma^*$ is very similar. 
\end{proof}

\medskip
\medskip

When ${\bf k}$ is a field, the last claim in Proposition \ref{bire-bir}(ii) above about graded homomorphisms is known as the Graded Uniqueness Theorem \cite[Theorem 2.2.15]{aam17}. The generalization to a unital commutative ring ${\bf k}$ is  given in \cite[Theorem 5.3]{t11}. We have included a short proof here since it follows immediately from the main assertion of Proposition \ref{bire-bir}(ii).

\medskip
\medskip

The restriction map $\underline{\>\>\>}\cap {\bf k}\Gamma$ from the ideals of $L_{\bf k}(\Gamma)$ to the ideals of ${\bf k}\Gamma$ is one-to-one by Proposition \ref{bire-bir}, but not onto in general. For instance, when $\Gamma$ is a single loop, the ideal $(X)$ of $ {\bf k}[x] \cong {\bf k}\Gamma$ is not the restriction of any ideal of $L_{\bf k}(\Gamma) \cong {\bf k}[x,x^{-1}]$. 

\begin{remark} \label{karsit}
    If $R$ is a subring of $S$ then $\underline{\> \> \>} \cap R$ may not preserve "join": If $R = \mathbb{F}[x] \hookrightarrow \mathbb{F}[x,y]=S$ then $(y)\cap \mathbb{F}[x] +(1-y) \cap \mathbb{F}[x] =0$ but $\bigl( (y)+(1-y) \bigr) \cap \mathbb{F}[x]= \mathbb{F}[x]$.
\end{remark}

\begin{corollary} \label{bire-bir2}
When $\Lambda$ is a subgraph of a digraph $\Gamma$ with the property that if $e$ is an arrow in $\Lambda$ and $se=sf$ for an arrow $f$ in $\Gamma$, then $f$ is an arrow in $\Lambda$. Sending each vertex, each arrow and each dual arrow in $L_{\bf k}(\Lambda)$ to itself in $L_{\bf k}(\Gamma)$ defines a one-to-one $*$-homomorphism.
\end{corollary}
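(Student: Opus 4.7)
The plan is to split the corollary into two tasks: (1) showing that the assignment $v \mapsto v$, $e \mapsto e$, $e^* \mapsto e^*$ extends to a well-defined $*$-homomorphism $\varphi : L_{\bf k}(\Lambda) \to L_{\bf k}(\Gamma)$, and (2) showing that $\varphi$ is one-to-one.

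For well-definedness, I would verify each of the defining relations of $L_{\bf k}(\Lambda)$ holds in $L_{\bf k}(\Gamma)$ on the images of the generators. The relations $(V)$ and $(E)$, and $(CK1)$ for $e, f \in E(\Lambda)$, are immediate because they hold in $L_{\bf k}(\Gamma)$ for \emph{all} vertices and arrows of $\Gamma$, hence in particular for those coming from $\Lambda$. The star-compatibility is also automatic since the assignment swaps $e \leftrightarrow e^*$. The crucial relation is $(CK2)$: if $v$ is a non-sink of $\Lambda$ with $0 < |s_\Lambda^{-1}(v)| < \infty$, then $(CK2)$ in $L_{\bf k}(\Lambda)$ demands $v = \sum_{e \in s_\Lambda^{-1}(v)} ee^*$. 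This is exactly where the hypothesis on $\Lambda$ enters: since at least one arrow of $\Lambda$ has source $v$, every arrow of $\Gamma$ with source $v$ must lie in $\Lambda$, so $s_\Lambda^{-1}(v) = s_\Gamma^{-1}(v)$. Hence the right-hand side above is precisely the right-hand side of $(CK2)$ at $v$ in $L_{\bf k}(\Gamma)$, which equals $v$ there.

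For injectivity, observe that $\varphi$ is $\mathbb{Z}$-graded, since $|v| = 0$, $|e| = 1$, $|e^*| = -1$ are all preserved. This lets me apply the graded criterion in the second sentence of Proposition \ref{bire-bir}(ii): a graded homomorphism out of $L_{\bf k}(\Lambda)$ is one-to-one if and only if its restriction to ${\bf k}v$ is one-to-one for every $v \in V(\Lambda)$. So the task reduces to showing that $\lambda v \neq 0$ in $L_{\bf k}(\Gamma)$ for every nonzero $\lambda \in {\bf k}$ and every vertex $v$ of $\Lambda$, and this is exactly Proposition \ref{pp^*}(ii) applied in $\Gamma$ with $p = q = v$.

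The main (and really the only) obstacle is the $(CK2)$ check: one has to be careful that the hypothesis on $\Lambda$ is invoked correctly. Without it, a vertex could be a non-sink in $\Lambda$ but emit additional arrows in $\Gamma$, in which case $\sum_{e \in s_\Lambda^{-1}(v)} ee^*$ would be a proper subsum of the $(CK2)$ sum in $L_{\bf k}(\Gamma)$ and would generally fail to equal $v$, so the assignment would not descend to a homomorphism. Once $(CK2)$ is handled, both the existence of $\varphi$ and its injectivity are quick consequences of results already established in Sections 2 and 4.
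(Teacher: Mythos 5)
Your proof is correct. The well-definedness half is exactly the paper's argument: all relations except (CK2) are inherited verbatim from $L_{\bf k}(\Gamma)$, and your observation that the hypothesis on $\Lambda$ forces $s_\Lambda^{-1}(v)=s_\Gamma^{-1}(v)$ for every non-sink $v$ of $\Lambda$ is precisely the point the paper leaves implicit when it says (CK2) ``follows from the condition on $\Lambda$.'' For injectivity you take a slightly different route: you note that $\varphi$ is $\mathbb{Z}$-graded and invoke the graded criterion in the second sentence of Proposition \ref{bire-bir}(ii) (i.e.\ the Graded Uniqueness Theorem), which only requires knowing that $\lambda v\neq 0$ for $\lambda\neq 0$, supplied by Proposition \ref{pp^*}(ii). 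The paper instead uses the first (ungraded) criterion of Proposition \ref{bire-bir}(ii): it checks that $\varphi$ restricted to the whole path algebra ${\bf k}\Lambda$ is one-to-one, which it gets from the linear independence of paths in $L_{\bf k}(\Gamma)$ (Proposition \ref{Path}(i)). Both are legitimate; yours needs only the weaker input that vertices do not vanish, while the paper's showcases its new ungraded uniqueness criterion (the stated point of Proposition \ref{bire-bir}) and sidesteps any appeal to gradedness. Since the map here happens to be graded, the two arguments are interchangeable, and each is a two-line application of Proposition \ref{bire-bir}(ii) once the homomorphism exists.
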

\begin{proof}
To see that this defines a homomorphism $\varphi$, all relations defining $L_{\bf k}(\Lambda)$ need to be checked. This is immediate for all but the relations (CK2), which follow from the condition on $\Lambda$. Clearly, $\varphi$ is a $*$-homomorphism. The restriction of $\varphi$ to ${\bf k}\Lambda$  is one-to-one since the set of paths is linearly independent in $L_{\bf k}(\Gamma)$ by Proposition \ref{Path}(i). Hence $\varphi$ is one-to-one by Proposition \ref{bire-bir}.
\end{proof}

\medskip
\medskip
Corollary \ref{bire-bir2} allows us to identify $L_{\bf k}(\Lambda)$ with a subalgebra of $L_{\bf k}(\Gamma)$ when the subgraph $\Lambda$ satisfies the condition that if $e$ is an arrow in $\Lambda$ and $se=sf$ for an arrow $f$ in $\Gamma$, then $f$ is an arrow in $\Lambda$. For instance, if $H$ is a hereditary subset of $V$ then $\Gamma_H$, the full subgraph on $H$, satisfies this condition.

\medskip
\medskip
Now we consider the consequences of a geometric (graph theoretic)  process we call the \textbf{reduction algorithm} (\cite{ko1}, \cite{ko2}) defined on a row-finite digraph $\Gamma=(V,E)$: For a loopless nonsink $v \in V$, we replace each path $fg$ of length 2 such that $tf=v=sg$ with an arrow labeled $fg$ from $sf$ to $tg$ and delete $v$ and all arrows touching $v$. (Note that $fg$ denotes a path in $\Gamma$, but an arrow in its reduction.) In particular, if $v$ is a source but not a sink, then we delete $v$ and all arrows starting at $v$ without adding any new arrows. We may repeat this  as long as there is a loopless non-sink. Any digraph obtained during this process is called a \textbf{reduction} of $\Gamma$. 
If $\Gamma$ is finite, after finitely many steps we will reach a \textbf{complete reduction} of $\Gamma$,  which has no loopless nonsinks. A digraph in which every vertex is either a sink or has a loop, is called \textbf{completely reduced}.

\medskip
\medskip
In the example below, $\Gamma_1$, $\Gamma_2$ and $\Gamma_3$ are reductions of the digraph $\Gamma$. The number of arrows from one vertex to another is indicated by the number above the arrow (so, in $\Gamma$ there are 3 arrows from $v$ to $w$).  $\Gamma_3$ is a complete  reduction of $\Gamma$.

\begin{example}

$$ \xymatrix{&   {\bullet}^{v} \ar@/^1pc/[r]^{3} & {\bullet}^{\text{\textcolor{red}{w}}} \ar@/^1pc/[l]_{2} \\ {\bullet}^{u} \ar@{->}[ur] \ar@{->}[dr]^{5} & \\
                & \bullet^{x} \ar@{->}[r]  & \bullet^{y}  }
 \xymatrix{&{\bullet}^{v}\ar@(ul,ur)^{6} & \\ {\bullet}^{\text{\textcolor{red}{u}}} \ar@{->}[ur] \ar@{->}[dr]^{5}& \\
               & \bullet^{x} \ar@{->}[r]  & \bullet^{y}  }
 \xymatrix{&{\bullet}^{v}\ar@(ul,ur)^{6} & \\ 
                & \bullet^{\textcolor{red}{x} } \ar@{->}[r]  & \bullet^{y} }
 \xymatrix{ &{\bullet}^{v} \ar@(ul,ur)^{6}\\ 
            &  \bullet^{y} }$$

 $$\qquad  \Gamma \qquad\qquad  \qquad  \quad  \qquad  \Gamma_1\qquad  \qquad \qquad\qquad  \Gamma_2\qquad  \qquad \qquad \Gamma_3 $$
\end{example}

The complete reduction $\Gamma_3$ of the digraph $\Gamma$ above does not depend on the choice of reductions. However, complete reductions are not unique (up to digraph isomorphism) in general. For instance, the digraph $\Lambda$ below has two non-isomorphic complete reductions:

$$\Lambda : \  \quad  \xymatrix{& {\bullet}^{u} \ar@/^1pc/[r] &\ar [l]  {\bullet}^{v}  \ar [r]  & {\bullet}^{w} \ar@/^1pc/[l] }$$

\noindent
The complete reductions of $\Lambda$:
$$   \xymatrix{ & {\bullet}^{v} \ar@(ul,dl) \ar@(ur,dr)  } \qquad \quad 
 \xymatrix{& {\bullet}^{u}\ar@(ul,dl) \ar@/^0.6pc/[r] & {\bullet}^{w} \ar@(ur,dr)    \ar@/^0.5pc/[l] }\quad $$

A digraph and all reductions of it have the same set of sinks. Cycles may get shorter under each reduction and but they do not disappear. If $\Gamma$ is finite and the cycles in $\Gamma$ are pairwise disjoint then $\Gamma$ has a unique complete reduction up to isomorphism. All cycles in $\Gamma$ become loops in the complete reduction. The vertices of the complete reduction  correspond to the sinks and the cycles in $\Gamma$. 

\begin{theorem}  \label{Reduced}
Let ${\bf k}$ be a commutative ring with 1. If  $\Lambda$ is a  reduction of $\Gamma$, then $L_{\bf k}(\Gamma)$ and $L_{\bf k}(\Lambda)$ are Morita equivalent, that is, their unital module categories are equivalent. $L_{\bf k}(\Lambda)$ is isomorphic to a subalgebra of $L_{\bf k}(\Gamma)$ and a Morita equivalence is given  by the restriction functor from $Mod_{L_{\bf k}(\Gamma)}$ to $Mod_{L_{\bf k}(\Lambda)}$. \end{theorem}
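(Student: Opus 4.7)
The plan is to reduce to the case of a single elementary reduction at a loopless nonsink $v \in V$ (the general case follows by iteration) and to proceed in two parts: (a) construct an explicit injective $*$-algebra embedding $\varphi\colon L_{\bf k}(\Lambda) \hookrightarrow L_{\bf k}(\Gamma)$ identifying $L_{\bf k}(\Lambda)$ with a generalized corner subalgebra; (b) exhibit the restriction functor $R \colon Mod_{L_{\bf k}(\Gamma)} \to Mod_{L_{\bf k}(\Lambda)}$ --- which at the module level sends $M$ to $\bigoplus_{u \ne v} Mu$ with $L_{\bf k}(\Lambda)$ acting through $\varphi$ --- as an equivalence, using the quiver-representation dictionary of Proposition \ref{teorem} to produce a quasi-inverse.

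For (a), define $\varphi$ on generators by sending each $u \in V \setminus \{v\}$ and each old arrow $e$ (with its dual) to itself, and each new arrow $fg$ (with $tf = v = sg$) to the length-two product $fg \in L_{\bf k}(\Gamma)$, with $(fg)^* \mapsto g^*f^*$. Relations (V) and (E) are immediate; (CK1) follows from $g^*f^*fg = g^* v g = g^*g = tg$ and the orthogonality of distinct generators. For (CK2) at a nonsink $u \ne v$ of $\Lambda$, split $u = \sum_{se=u} ee^*$ in $L_{\bf k}(\Gamma)$ into the $te \ne v$ and $te = v$ contributions; for each $f$ with $tf = v$ in the second sum, rewrite $ff^* = f\, v\, f^* = \sum_{sg=v}(fg)(fg)^*$ using (CK2) at $v$ in $\Gamma$. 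The result is exactly $\sum_{a \in E(\Lambda),\, sa = u} aa^*$, as required. Injectivity follows from Proposition \ref{bire-bir}(ii): $\varphi$ sends each $\Lambda$-path $p$ to a $\Gamma$-path $\varphi(p)$, and because $v$ is loopless the interior occurrences of $v$ in $\varphi(p)$ pinpoint exactly the new-arrow factors of $p$, so distinct $\Lambda$-paths produce distinct $\Gamma$-paths, which are linearly independent by Proposition \ref{Path}(i).

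For (b), work with quiver representations. Define $R$ by $R(\rho)(u) = \rho(u)$ for $u \ne v$, $R(\rho)(e) = \rho(e)$ for old arrows, and $R(\rho)(fg) = \rho(g) \circ \rho(f)$ for new arrows (well-defined since $tg \ne v$). Condition (Iso) for $R(\rho)$ at $u \ne v$ factors as (Iso) for $\rho$ at $u$ followed by (Iso) for $\rho$ at $v$ applied componentwise to each $\rho(v)$-summand --- a composition of isomorphisms. A quasi-inverse $E$ is: $E(\sigma)(u) = \sigma(u)$ for $u \ne v$, $E(\sigma)(v) = \bigoplus_{sg=v} \sigma(tg)$, $E(\sigma)(e) = \sigma(e)$ on old arrows, $E(\sigma)(f) = \bigoplus_g \sigma(fg)$ for $tf = v$, and $E(\sigma)(g) = pr_g$ for $sg = v$. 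Then (Iso) at $v$ holds tautologically, while (Iso) at $u \ne v$ in $\Gamma$ unwinds exactly to (Iso) for $\sigma$ at $u$ in $\Lambda$. Direct inspection shows $R \circ E = id$, and the natural isomorphism $\rho \cong E \circ R(\rho)$ is the identity off $v$ together with $\bigoplus_{sg=v}\rho(g) \colon \rho(v) \stackrel{\sim}{\longrightarrow} \bigoplus_{sg=v}\rho(tg)$ at $v$, supplied by (Iso) for $\rho$.

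I expect the main obstacle to be the combinatorial bookkeeping involved in verifying (CK2) under $\varphi$ and in matching the (Iso) conditions across $\Gamma$ and $\Lambda$: both rely essentially on $v$ being loopless (so that no new arrow has $v$ as an endpoint and $tg \ne v$ whenever $sg = v$) and on $v$ being a nonsink (so that (CK2) at $v$ in $\Gamma$ is nontrivial and $E(\sigma)(v)$ is a well-defined nonzero direct sum). Once the one-step case is established, the full claim follows by induction on the number of vertices removed.
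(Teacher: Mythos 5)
Your proposal is correct and follows essentially the same route as the paper: the same embedding $\varphi$ sending a new arrow $fg$ to the length-two path $fg$, with (CK2) verified by splitting off the $te=v$ terms and applying (CK2) at $v$, injectivity via Proposition \ref{bire-bir}(ii) and Proposition \ref{Path}(i), and the Morita equivalence realized through the quiver-representation dictionary of Proposition \ref{teorem} with the quasi-inverse assigning $\bigoplus_{sg=v}\sigma(tg)$ to the deleted vertex. The only difference is one of exposition: you spell out the quasi-inverse and the (Iso) bookkeeping that the paper delegates to \cite[Theorem 4.1]{ko2}.
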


  \begin{proof}
      The proof of the first claim  when ${\bf k}$ is a field is given in 
 \cite[Theorem 4.1]{ko2}. This proof also works over a commutative ring with 1, similar to the proof of Proposition \ref{teorem} (which is the main tool of this proof). 

\medskip
\medskip
 
It suffices to prove that $L_{\bf k}(\Lambda)$ is isomorphic to a subalgebra of $L_{\bf k}(\Gamma)$ when $\Lambda$ is obtained from $\Gamma$ by eliminating a single (loopless nonsink) $v$ in  $V$. We define a $*$-algebra homomorphism $\varphi$ from $L_{\bf k}(\Lambda)$ to $L_{\bf k}(\Gamma)$ by sending each vertex and each original arrow to itself and a new arrow $ef$ of $\Lambda$ to the path $ef$ in $\Gamma$. It's routine to check that the relations are satisfied, checking (CK2)  uses (CK2) of $\Gamma$ twice if new arrows are involved.

\medskip
\medskip
The image of $\varphi$ is the subalgebra of $L_{\bf k}(\Gamma)$ spanned by 
   $\{pq^* \> \vert \>  sp \neq v \neq sq \}$ as ${\bf k}$-module: This set is closed under multiplication by Fact \ref{fact}. If $pq^*\in L_{\bf k}(\Gamma)$ with $sp \neq v\neq sq$, but $tp=v=tq$ then $pq^*=pvq^*=p(\sum_{sf=v} ff^*)q^*$ since $v$ is not a sink. Note that $p$ and $q$ are paths of positive length because $sp\neq v =tp$ and $sq\neq v =tq$, hence $p=p'e$ and $q=q'g$ with $ef$ and $gf$ being images of new arrows in $\Lambda$ for all $f$ with $sf=v$. Similarly, if $p'$ or $q'$ pass through $v$ then we have a path of length 2 corresponding to a new arrow in $\Lambda$, using $tp'=se\neq v \neq sg=tq'$ since $v$ is loopless.

\medskip
\medskip
   
   The restriction of $\varphi$ to ${\bf k}\Lambda$ is one-to-one, so $\varphi$ is one-to-one by Propositon \ref{bire-bir}. Hence $\varphi$ defines an isomorphism to 
   the subalgebra of $L_{\bf k}(\Gamma)$ spanned by 
   $\{pq^* \> \vert \>  sp \neq v \neq sq \}$.

\medskip
\medskip

 From the quiver representation viewpoint, reduction corresponds to restricting the representation to the remaining vertices. Hence the restriction functor from $Mod_{L_{\bf k}(\Gamma)}$ to $Mod_{L_{\bf k}(\Lambda)}$ gives a Morita equivalence via Proposition \ref{teorem}. 
 The new arrow $ef$ is assigned the composition of the ${\bf k}$-module homomorphisms assigned to $e$ and $f$.  We recover the original representation $\rho$ by assigning 
$\displaystyle{\bigoplus_{se=v}} \rho(te)$ to the deleted vertex $v$. The details are given in \cite[Theorem 4.1]{ko2}.
 \end{proof}

\medskip
\medskip
       
   When $\Gamma$ is finite, a reduction $\Lambda$ of $\Gamma$ is isomorphic to the corner algebra $(\sum u)L_{\bf k}(\Gamma) (\sum u) $ where the sum is over the vertices of $\Lambda$, i.e., all the vertices that were not eliminated during the reduction process. Actually the expression $(\sum u)L_{\bf k}(\Gamma) (\sum u) $ makes sense even if $\Gamma$ is infinite because the product of all but finitely many vertices with an element of $L_{\bf k}(\Gamma)$ is $0$. 
   
   \begin{corollary} \label{corner1}
   If $\Lambda$ is a reduction of $\Gamma$ then 
   $L_{\bf k}(\Lambda)\cong (\sum u)L_{\bf k}(\Gamma) (\sum u)$ where the sum is over the vertices of $\Lambda$ as above.
\end{corollary}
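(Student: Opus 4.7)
The plan is to combine Theorem \ref{Reduced} with an explicit identification of the image of the embedding $\varphi:L_{\bf k}(\Lambda)\hookrightarrow L_{\bf k}(\Gamma)$ built in its proof. Let $\epsilon:=\sum_{u}u$ where $u$ ranges over the vertices of $\Lambda$; even when this set is infinite, the expressions $\epsilon a$ and $a\epsilon$ are well-defined for $a\in L_{\bf k}(\Gamma)$, because by Fact \ref{fact}(ii) any such $a$ lies in the sum of finitely many Pierce components $uL_{\bf k}(\Gamma)w$. Consequently $\epsilon L_{\bf k}(\Gamma)\epsilon$ is a well-defined ${\bf k}$-subalgebra, spanned by $\{pq^*:sp,sq\in V(\Lambda),\ tp=tq\}$ by Fact \ref{fact}(ii).

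First I would dispatch the one-step case, where $\Lambda$ is obtained from $\Gamma$ by eliminating a single loopless nonsink $v$. The proof of Theorem \ref{Reduced} already shows that the image of $\varphi$ is the ${\bf k}$-span of $\{pq^*:sp\neq v\neq sq\}$; since $V(\Lambda)=V\setminus\{v\}$, this span is exactly $\epsilon L_{\bf k}(\Gamma)\epsilon$. Hence $\varphi$ restricts to an isomorphism $L_{\bf k}(\Lambda)\,\widetilde{\longrightarrow}\,\epsilon L_{\bf k}(\Gamma)\epsilon$.

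For an arbitrary reduction $\Gamma=\Gamma_0\rightsquigarrow\Gamma_1\rightsquigarrow\cdots\rightsquigarrow\Gamma_n=\Lambda$ (eliminating a single vertex $v_i$ at step $i$), I would proceed by induction on $n$. Setting $\epsilon_i:=\sum_{u\in V(\Gamma_i)}u$, the one-step case supplies an isomorphism $L_{\bf k}(\Gamma_i)\cong\epsilon_i L_{\bf k}(\Gamma_{i-1})\epsilon_i$ at each stage. Because each single-step embedding fixes vertices, the element denoted $\epsilon=\epsilon_n$ inside $L_{\bf k}(\Lambda)$ is carried by the composite embedding to the element of the same name inside $L_{\bf k}(\Gamma)$. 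The chain of inclusions $V(\Gamma_n)\subseteq V(\Gamma_{n-1})\subseteq\cdots\subseteq V(\Gamma_0)$ yields $\epsilon\,\epsilon_i=\epsilon=\epsilon_i\,\epsilon$ for all $i\leq n$, so the composite image telescopes to $\epsilon L_{\bf k}(\Gamma)\epsilon$, giving the desired isomorphism $L_{\bf k}(\Lambda)\cong\epsilon L_{\bf k}(\Gamma)\epsilon$.

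The only real bookkeeping item, rather than a genuine obstacle, is verifying that the composite of the one-step embeddings lands precisely in the generalized corner determined by $\epsilon$. This is automatic from two observations: (a) each $\varphi$ sends vertices to themselves, and (b) the ambient idempotents $\epsilon_i$ multiply as above, reducing the telescoped image to a single corner inside $L_{\bf k}(\Gamma)$.
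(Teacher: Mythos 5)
Your proof is correct and takes essentially the same route as the paper's: identifying the image of the embedding $\varphi$ from Theorem \ref{Reduced} with the corner $(\sum u)L_{\bf k}(\Gamma)(\sum u)$, which by Fact \ref{fact}(ii) is the span of $\{pq^* \mid sp,\, sq \in V_{\Lambda}\}$. The paper's proof is a one-line appeal to that identification; your explicit induction and telescoping of the idempotents $\epsilon_i$ for multi-step reductions just spells out the same idea more carefully.
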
   \begin{proof}
The subalgebra of $L_{\bf k}(\Gamma)$ spanned by 
   $\{pq^* \> \vert \>  sp ,\>sq \in V_{\Lambda}\}$
  as in the proof of Theorem \ref{Reduced} above is $(\sum u)L_{\bf k}(\Gamma) (\sum u)$.
\end{proof}
       
  \begin{corollary}
 Let $\Gamma$ be a finite digraph with pairwise disjoint cycles and let $\Lambda$ be the complete reduction of $\Gamma$. If $U$ is a set containing all the sinks in $\Gamma$ and exactly one vertex from each cycle in $\Gamma$ then $$L_{\bf k}(\Lambda) \> \cong \>  \big(\sum_{u \in U} u\big)L_{\bf k}(\Gamma) \big(\sum_{u \in U} u\big) .$$
  \end{corollary}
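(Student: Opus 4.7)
The plan is to combine Corollary \ref{corner1} with the uniqueness of the complete reduction stated in the paragraph preceding Theorem \ref{Reduced}. By Corollary \ref{corner1}, for any reduction $\Lambda'$ of $\Gamma$ one has
\[
L_{\bf k}(\Lambda')\cong \bigl(\sum_{v\in V_{\Lambda'}}v\bigr)L_{\bf k}(\Gamma)\bigl(\sum_{v\in V_{\Lambda'}}v\bigr).
\]
So it suffices to exhibit, for the given $U$, a complete reduction $\Lambda_U$ of $\Gamma$ whose vertex set is exactly $U$; the uniqueness of the complete reduction up to isomorphism then forces $\Lambda_U\cong\Lambda$, and the conclusion follows.

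To construct such a $\Lambda_U$, I would proceed in two phases. First, classify each non-sink vertex of $\Gamma$ as either a \emph{cycle vertex} (lying on some cycle) or a \emph{transient vertex} (lying on no cycle), and eliminate the transient vertices one at a time in an arbitrary order; each such vertex is a loopless non-sink, so every step of this phase is legal. Second, for each cycle $C$ with designated representative $u\in V_C\cap U$, eliminate the remaining $|V_C|-1$ vertices of $C$ in the order they appear around $C$ starting just after $u$. At each step of this second phase the vertex to be removed still lies on a cycle of length $\geq 2$ and is therefore loopless. Once both phases finish, every transient vertex is gone, each original cycle has collapsed to a single loop at its chosen representative, the remaining vertex set is exactly $U$, and every remaining vertex is either an original sink or carries a loop, so $\Lambda_U$ is completely reduced.

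The main technical check is that no scheduled elimination is ever blocked by a newly produced loop. A loop at a vertex $w$ can only appear upon eliminating some $u$ for which both $w\to u$ and $u\to w$ are arrows; this produces a cycle of length two through $u$ and $w$. Since the cycles of $\Gamma$ are pairwise disjoint, such a 2-cycle must coincide with the unique cycle containing $u$ and $w$, which is only possible in the second phase when $u$ and $w$ are the two vertices of the same 2-cycle and $w$ is the designated representative in $U$. In that case the new loop is precisely the one representing this cycle in $\Lambda_U$, and $w$ is not scheduled for any further elimination. All other reduction moves preserve the looplessness of every vertex still slated for removal, so the construction goes through and the isomorphism is obtained.
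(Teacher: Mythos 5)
Your proposal is correct and follows essentially the same route as the paper: the paper's proof simply observes that the complete reduction is obtained by eliminating all vertices other than the sinks and one arbitrarily chosen vertex from each cycle, and then applies Corollary \ref{corner1}. You merely make explicit the elimination schedule and the (correct) check that no newly created loop blocks a scheduled elimination, which the paper leaves implicit.
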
     
       
   \begin{proof}
   A complete reduction of a finite digraph whose cycles are pairwise disjoint is gotten by eliminating all the vertices other than the sinks and one (arbitrarily  chosen) vertex from each cycle. Applying Corollary \ref{corner1} yields the result.
   \end{proof}

\section{Gelfand-Kirillov Dimension of $L_{\bf k}(\Gamma)$} \label{GK}

In this section we start by constructing a ${\bf k}$-basis for a Leavitt path algebra $L_{\bf k}(\Gamma)$ of a finite digraph $\Gamma$ whose cycles are pairwise disjoint.
Even when {\bf k} is a field, this basis is different from the Gr\"{o}bner-Shirshov basis in \cite{aajz12} or its generalization in \cite{ag12}. In particular, our proof does not use any version of the Demand Lemma.

\medskip
\medskip 

Recall that when the cycles in $\Gamma$ are pairwise disjoint, the preorder $\leadsto$ defines a partial order on the set of sinks and cycles in $\Gamma$.

\begin{theorem} \label{basis}
If $\Gamma$ is a finite digraph whose cycles are pairwise disjoint then $L_{\bf k}(\Gamma)$ is free as a ${\bf k}$-module with basis 
$$\mathcal{B} :=\{ pq^*\> \vert \> tp=tq \textit{ is a sink } \} \cup \{ pC^nq^* \> \vert \> p, q \in P_{C} ,\> n \in \mathbb{Z},\>  C \textit{ is a cycle   } \}$$ 
where $C^0:=sC$  and $C^{-n}:=(C^*)^n$ for $n>0$.
\end{theorem}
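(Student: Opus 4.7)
I propose to prove the basis theorem by induction on $|V|$, peeling off at each step either a sink (together with its ``absorption basin'') or else a maximal cycle, and reading off its contribution to $\mathcal{B}$ from Proposition \ref{Path}(iv) or (vi). The base cases $|V|\le 1$ are immediate: the algebra is either ${\bf k}$ or ${\bf k}[x,x^{-1}]$ (the latter by Proposition \ref{Path}(ii)), and $\mathcal{B}$ reduces to $\{v\}$ or $\{C^n:n\in\mathbb{Z}\}$ accordingly.

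For $|V|>1$, suppose first that $\Gamma$ contains a sink $w$, and let $H$ be the hereditary saturated closure of $\{w\}$. A sink other than $w$ is never added (saturation acts only at non-sinks), and no vertex on a cycle is added (such a vertex has an arrow to another cycle vertex, so adding cycle vertices would require doing so simultaneously, which cannot be started from a sink). Propositions \ref{hereditary}(ii) and (iii) then give $L_{\bf k}(\Gamma)/(H)\cong L_{\bf k}(\Gamma/H)$ and $(H)=(w)$, and Proposition \ref{Path}(iv) identifies $(w)$ as a free ${\bf k}$-module with basis $\{pq^*:p,q\in P_w\}$ — exactly the sink-$w$ part of $\mathcal{B}$. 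Otherwise $\Gamma$ has no sink, and by finiteness there is at least one cycle; let $C$ be a maximal cycle in the partial order $\leadsto$. Maximality combined with pairwise disjointness forces $C$ to have no exits: any exit would lead to a vertex from which a return to $V_C$ is impossible (else a second cycle sharing a vertex with $C$), and hence $C$ would strictly precede some other sink or cycle. Taking $H$ to be the hereditary saturated closure of $V_C$, one obtains $(H)=(sC)$, and Proposition \ref{Path}(vi) identifies this as a free ${\bf k}$-module with basis $\{pC^nq^*:p,q\in P_C,\,n\in\mathbb{Z}\}$ — exactly the cycle-$C$ part of $\mathcal{B}$.

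In both cases $\Gamma/H$ is strictly smaller and still has pairwise disjoint cycles, so the inductive hypothesis supplies a basis $\mathcal{B}_{\Gamma/H}$ of $L_{\bf k}(\Gamma/H)$. Because $H$ is hereditary, no path in $\Gamma$ ending outside $H$ can traverse $H$, so the surviving sinks and cycles, and the sets $P_{w'}$ and $P_{C'}$ defining the remaining basis elements, are the same whether computed in $\Gamma$ or in $\Gamma/H$. The tautological lift of $\mathcal{B}_{\Gamma/H}$ to $L_{\bf k}(\Gamma)$ therefore coincides with $\mathcal{B}$ minus the basis of $(H)$ exhibited above. Since $L_{\bf k}(\Gamma/H)$ is ${\bf k}$-free by induction, the short exact sequence
$$0\longrightarrow (H)\longrightarrow L_{\bf k}(\Gamma)\longrightarrow L_{\bf k}(\Gamma/H)\longrightarrow 0$$
splits as ${\bf k}$-modules, assembling the two pieces into the desired basis $\mathcal{B}$ of $L_{\bf k}(\Gamma)$.

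The main obstacle I anticipate is the combinatorial bookkeeping in the second paragraph: one must verify that the saturated closure $H$ absorbs only the chosen sink (or maximal cycle) together with non-sink, non-cycle ``feeder'' vertices, and that the path sets $P_{w'}$ and $P_{C'}$ for the surviving sinks and cycles, as well as the identifications of sinks and cycles themselves, really do match on passing from $\Gamma$ to $\Gamma/H$.
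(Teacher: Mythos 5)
Your proposal is correct and follows essentially the same route as the paper's proof: peel off a sink or a no-exit cycle via its hereditary saturated closure $H$, identify $(H)$ with the corresponding block of $\mathcal{B}$ using Proposition \ref{Path}(iv) or (vi), and lift a basis of $L_{\bf k}(\Gamma/H)$ through the short exact sequence by induction. The only cosmetic differences are that you induct on $|V|$ rather than on the number of sinks and cycles, and you make explicit (via maximality in $\leadsto$) why a no-exit cycle exists and why $H$ swallows no other sink or cycle, which the paper handles by citing Fact \ref{H&S}.
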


\begin{proof}
We will use induction on the total number $m$ of sinks and cycles in $\Gamma$. When $\Gamma$ has a unique sink $w$ and no cycles,  each  $p_1q_1^* \in L_{\bf k}(\Gamma)$ with $tp_1=tq_1$ can be expressed as a linear combination of $pq^*$'s with $tp=w=tq$ after applying (CK2) to $tp_1$ as needed. Hence $L_{\bf k}(\Gamma)=(w)$ and $\{ pq^*\> \vert \> tp=w=tq \}$ is a basis for $L_{\bf k}(\Gamma)$ by Proposition \ref{Path}(iv). If $\Gamma$ has a unique cycle $C$ and no sinks then $C$ has no exit. As above, each  $p_1q_1^* \in L_{\bf k}(\Gamma)$ with $tp_1=tq_1$ can be expressed as a linear combination of $pq^*$'s with $tp=sC=tq$ after applying (CK2) to $tp_1$ as needed. Hence $L_{\bf k}(\Gamma)=(sC)$ and $\{ pC^nq^*\> \vert \> n \in \mathbb{Z},  \> p,q \in P_C \}$ is a basis for $L_{\bf k}(\Gamma)$ by Proposition \ref{Path}(vi). This establishes the initial case of our induction.

\medskip
\medskip

For the general case, if $\Gamma$ has a sink then we pick a sink $w$,  otherwise we pick a cycle $C$ with no exit.
We consider the short exact sequence 
$$0 \longrightarrow (H) \longrightarrow L_{\bf k}(\Gamma) \longrightarrow L_{\bf k}(\Gamma/H) \longrightarrow 0$$
where $H$ is hereditary saturated closure of either $\{w\}$ or $\{sC \}$. By Proposition \ref{hereditary}(iii) either $(H)=(w)$ or $(H)=(sC)$. Note that $\Gamma/H$ contains all cycles and sinks in $\Gamma$ except for either $w$ or $C$ since no other sink or cycle can be in the hereditary saturated closure of $\{w\}$ or $\{sC\}$ by Fact \ref{H&S},
because $C$ has no exit. Since the total number of sinks and cycles in $\Gamma/H$ is less than those in $\Gamma$, by induction hypothesis $L_{\bf k}(\Gamma/H)$ has a ${\bf k}$-basis of the desired form. We can lift this basis to $L_{\bf k}(\Gamma)$ keeping the same form. The union of this with a basis of either $(w)=(H)$ or $(sC)=(H)$ used above gives the desired ${\bf k}$-basis for $L_{\bf k}(\Gamma)$.
\end{proof}

\medskip
\medskip

Now we use Theorem \ref{basis} to compute the Gelfand-Kirillov dimension of Leavitt path algebras.

\medskip
\medskip
Let $\mathbb{F}$ be a field, $A$ a finitely generated $\mathbb{F}$-algebra with 1. The Gelfand-Kirillov dimension of $A$ is:
 $$GK dim A =\limsup_{n \to \infty} \frac{log (dim(W^n))}{ log (n)}$$
 where $W$ is a finite dimensional $\mathbb{F}$-subspace generating $A$ with $1\in W$, and $W^n$ is the $\mathbb{F}$-span of  $n$-fold products of elements from $W$. The Gelfand-Kirillov dimension of $A$ is independent of the choice of $W$. The algebra $A$ has polynomial growth if and only if $GKdim(A)$ is finite.
 
\medskip
\medskip

The Leavitt path algebra $L_{\mathbb{F}}(\Gamma)$ is finitely generated if and only if $\Gamma$ is finite. In which case $L_{\mathbb{F}}(\Gamma)$ has $1=\sum_{v \in V} v$. If $\Gamma$ has intersecting cycles, say $C$ and $D$, with $sC=u=sD$ then the subalgebra generated by $C$ and $D$ is a free algebra in 2 noncommuting variables by Propositon \ref{Path}(i). Therefore $\mathbb{F}\langle C,D \rangle \>$ and hence $L_{\mathbb{F}}(\Gamma)$ have exponential growth, so $GKdim L_{\mathbb{F}}(\Gamma)$ is infinite. The converse is also true and 
we will prove a finer version which gives the Gelfand-Kirillov dimension of $L_{\mathbb{F}}(\Gamma)$ in terms of the digraph $\Gamma$, via a \textit{height} function defined on the poset of the sinks and cycles in a finite digraph $\Gamma$ whose cycles are pairwise disjoint. In particular, this also provides a new proof of \cite[Theorem 5]{aajz12}.

\medskip
\medskip
Let the cycles in $\Gamma$ be pairwise disjoint. Then the preorder $\leadsto$ defines a partial order on the set of sinks and cycles in $\Gamma$.  
We define the {\bf height} function on the sinks and the cycles in a finite digraph $\Gamma$: The \textit{height of a sink} is 0. The \textit{height of a cycle with no exit} is 1. 
The \textit{height of a cycle} $C$ \textit{with an exit} is recursively defined as: $\textit{ht} (C) = 2+max \{ \textit{ht}( x)  \> \vert \>  C \leadsto x\> , \> C\neq x \} $. (This also defines the height of the vertices of the complete reduction since they are identified with the sinks and the cycles of $\Gamma$.) We define the height of $\Gamma$ to be the maximum of the heights of its cycles or 0 if $\Gamma$ has no cycles. Since reduction preserves sinks and cycles, height is invariant under reduction.  

\medskip  
  \medskip
    
     \begin{theorem} \label{height}
If $\mathbb{F}$ is a field and $\Gamma$ is a finite digraph whose cycles are pairwise disjoint then $GK dim L_{\mathbb{F}}(\Gamma)=ht(\Gamma)$. 
\end{theorem}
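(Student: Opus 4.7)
The plan is to use the basis $\mathcal{B}$ from Theorem \ref{basis} to compute $\dim W^n$ directly for a natural finite-dimensional generating subspace $W \subset L_{\mathbb{F}}(\Gamma)$, and to identify the polynomial degree of its growth with $ht(\Gamma)$. By Theorem \ref{Reduced} and Corollary \ref{corner1}, $L_{\mathbb{F}}(\Gamma)$ is Morita equivalent to $L_{\mathbb{F}}(\hat{\Gamma})$, where $\hat{\Gamma}$ is the complete reduction of $\Gamma$; since Gelfand--Kirillov dimension is Morita invariant for finitely generated unital algebras and the height function is preserved under reduction, I may replace $\Gamma$ by $\hat{\Gamma}$ and assume every vertex of $\Gamma$ is either a sink or carries a unique loop. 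The subgraph of non-loop arrows is then acyclic, for otherwise a non-loop directed cycle would share vertices with the loops at those vertices, violating pairwise disjointness.

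Take $W := \mathbb{F}\cdot 1 + \mathbb{F}V + \mathbb{F}E + \mathbb{F}E^{*}$ and assign to each basis element a length via $\ell(pq^{*}) := \ell(p) + \ell(q)$ and $\ell(pC^{m}q^{*}) := \ell(p) + |m| + \ell(q)$ (each cycle of $\hat{\Gamma}$ now having length $1$). Every product of generators reduces via the defining relations $(V)$, $(E)$, $(CK1)$, and $(CK2)$ (applied in the length-non-increasing direction, which is possible because every interior vertex is already a sink or a cycle's base-point) to a linear combination of basis elements of no greater length; conversely every basis element of length $\leq n$ is manifestly a product of $\leq n$ generators from $V \sqcup E \sqcup E^{*}$. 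Hence $W^{n} = \mathrm{span}\,\mathcal{B}_{n}$ where $\mathcal{B}_{n} := \{b \in \mathcal{B} : \ell(b) \leq n\}$, so $\dim W^{n} = |\mathcal{B}_{n}|$ and $GKdim\,L_{\mathbb{F}}(\Gamma) = \limsup_{n}(\log |\mathcal{B}_{n}|)/(\log n)$.

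The key combinatorial input is the count of paths in $\hat{\Gamma}$ ending at a fixed vertex $v$. Each such path factors uniquely as $L_{v_{0}}^{a_{0}} e_{1} L_{v_{1}}^{a_{1}} \cdots e_{r} L_{v_{r}}^{a_{r}}$, where $v_{0} \to v_{1} \to \cdots \to v_{r} = v$ is a traversal of \emph{distinct} vertices along non-loop arrows (distinctness coming from acyclicity of the non-loop subgraph), $L_{v_{i}}$ is the loop at $v_{i}$ when present, and $a_{i} = 0$ whenever $v_{i}$ carries no loop. For a fixed such skeleton with $s$ loop-carrying vertices, the number of paths of length $\leq k$ is $\binom{k - r + s}{s} = \Theta(k^{s})$; maximizing over the finitely many skeletons ending at $v$ yields $\Theta(k^{s_{v}})$ total. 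Translated to the original $\Gamma$: for a sink $w$, $s_{w} = d_{w}$, the length of the longest chain $C_{1} \leadsto \cdots \leadsto C_{d_{w}} \leadsto w$ of distinct cycles; and $|P_{C} \cap \{\ell \leq k\}| = \Theta(k^{e_{C}})$, where $e_{C}$ is the length of the longest chain $D_{1} \leadsto \cdots \leadsto D_{e_{C}} \leadsto C$ of distinct cycles strictly above $C$.

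Substituting these counts: sink-type basis elements contribute $b_{S}^{w}(n) = \Theta(n^{2 d_{w}})$ to $|\mathcal{B}_{n}|$ (from summing over $\ell(p) + \ell(q) \leq n$), and cycle-type basis elements contribute $b_{C}^{C}(n) = \Theta(n^{2 e_{C} + 1})$ (the extra factor of $n$ coming from the integer parameter $m \in \mathbb{Z}$ in $C^{m}$), giving
\[
GKdim\,L_{\mathbb{F}}(\Gamma) = \max\bigl(\{2 d_{w} : w \text{ sink}\} \cup \{2 e_{C} + 1 : C \text{ cycle}\}\bigr).
\]
An induction on the recursive definition of $ht$ matches this maximum with $ht(\Gamma)$: a longest chain realizing $ht(\Gamma)$ from a top cycle descends either to a sink $w$ (giving $ht(\Gamma) = 2 d_{w}$, even) or to a cycle $C$ with no exit (giving $ht(\Gamma) = 2 e_{C} + 1$, odd); conversely each $2 d_{w}$ or $2 e_{C} + 1$ is bounded above by $ht(\Gamma)$ because extending a realizing chain upward by cycles of successively higher height exhibits a cycle of height $\geq 2 d_{w}$ or $\geq 2 e_{C} + 1$. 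The main obstacle I foresee is making the path-counting asymptotics genuinely rigorous---verifying that the leading-order skeleton term dominates the sum over sub-dominant skeletons and cleanly handling the degenerate cases $d_{w} = 0$ or $e_{C} = 0$ where the relevant paths are in finite supply.
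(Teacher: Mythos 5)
Your argument is correct in outline and reaches the right formula, but it takes a genuinely different route from the paper. The paper never passes to the complete reduction: it works with $\Gamma$ itself and chooses the generating subspace $W$ to be the span of all cycles, all cycle-free paths, and their duals, so that an element $C_1^{n_1}p_1\cdots C_k^{n_k}p_k^*\cdots(C_{k+m}^*)^{n_{k+m}}$ of the basis $\mathcal{B}$ lies in $W^n$ whenever the exponents sum to about $n$; the lower bound then comes from explicitly counting such elements along a chain of cycles realizing $ht(\Gamma)$ (a balls-in-boxes count giving $\binom{n+1}{k+m}$), and the upper bound from a separate estimate on which basis elements can occur in $W^n$. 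You instead invoke Morita invariance of Gelfand--Kirillov dimension to replace $\Gamma$ by its complete reduction, take the standard generating subspace $\mathbb{F}1+\mathbb{F}V+\mathbb{F}E+\mathbb{F}E^*$, and prove a clean filtration identity $W^n=\mathrm{span}\,\mathcal{B}_n$, after which a single count of $|\mathcal{B}_n|$ does both bounds at once. What your route buys is a tidier combinatorial picture (every cycle is a loop of length $1$, so the skeleton decomposition and the exponent count are transparent) and a reusable filtration on $L_{\mathbb{F}}(\hat\Gamma)$; what it costs is two extra ingredients the paper avoids: the Morita invariance of $GK$-dimension for unital finitely generated algebras (standard, but it should be cited, together with the fullness of the corner idempotent from Corollary \ref{corner1}), and the verification that rewriting a word in $V\sqcup E\sqcup E^*$ into the basis never increases your length function. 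That last point is the only place where your sketch is genuinely thin, and it is not where you locate the difficulty: the issue is not the path-counting asymptotics (which are fine, including the degenerate cases $d_w=0$, $e_C=0$) but the relation $CC^*=v-\sum_{e\neq C,\,se=v}ee^*$ at a looped vertex $v$ whose loop has exits. It does work out --- each correction term $(p_0C^{a-1}e)(q_0C^{b-1}e)^*$ lands at a vertex $te\neq v$ and is therefore already a basis element of the same length, so an induction on $\min(a,b)$ closes the argument --- but this computation needs to be written out, since it is exactly the step that replaces the paper's separate upper-bound estimate.
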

\begin{proof}
Let $W$ be the span of all the cycles in $\Gamma$, their duals, all paths not containing any cycles and their duals. Note that $1=\sum v \in W$ and $W$ generates $L_{\mathbb{F}}(\Gamma)$ since every element in the basis $\mathcal{B}$ of Theorem \ref{basis} is contained in $W^n$ for some $n$.

\medskip
\medskip

Let $C_1 \leadsto C_2 \leadsto \cdots \leadsto C_k$  and $C_{k+m} \leadsto C_{k+m-1} \leadsto \cdots \leadsto C_{k+1} \leadsto C_k$ be distinct cycles with $p_1$ a path containing no cycles from $sC_1$ to $sC_2$, similarly $p_2$ from $sC_2$ to $sC_3$, $\cdots , p_{k-1}$ from $sC_{k-1}$ to $sC_k$, $p_k$ from $sC_{k+1}$ to $sC_k$, $\cdots$, $p_{k+m-1}$ from $sC_{k+m}$ to $sC_{k+m-1}$. The elements in $W^n$ of the form $$sC_1^{n_0}C_1^{n_1}p_1C_2^{n_2}p_2 \cdots C_k^{n_k}p_k^*(C_{k+1}^*)^{n_{k+1}} \cdots p_{k+m-1}^* (C_{k+m}^*)^{n_{k+m}}$$  $$=C_1^{n_1}p_1C_2^{n_2}p_2 \cdots C_k^{n_k}p_k^*(C_{k+1}^*)^{n_{k+1}} \cdots p_{k+m-1}^* (C_{k+m}^*)^{n_{k+m}}$$ where $n_0+n_1+\cdots +n_{k+m}=n-(k+m-1)$. These are distinct elements of the  basis 
 $\mathcal{B}$ and there are $\binom{n+1}{k+m}$ of them for $n$ large (this counting problem is equivalent to counting the number of ways of placing $n-(k+m-1)$ identical coins into $k+m$ distinct pockets). Hence $dim (W^n)$ has a lower bound which is a polynomial of degree $k+m$ in n. Thus  $GKdim L_{\mathbb{F}}(\Gamma) \geq k+m$.

\medskip
\medskip 
 If $ht(\Gamma)=2k-1$  then we can find distinct cycles $C_1 \leadsto C_2 \leadsto \cdots \leadsto C_k$. Setting  $C_{k+1}:=C_{k-1}$,  $C_{k+2}:=C_{k-2}$, $\cdots C_{2k-1} =C_1$, we get $GKdim L_{\mathbb{F}}(\Gamma) \geq 2k-1=ht(\Gamma)$.
                            
                             \medskip  
                             \medskip

 Similarly, given $C_1 \leadsto C_2 \leadsto \cdots \leadsto C_k \leadsto w$  and $C_{k+m} \leadsto C_{k+m-1} \leadsto \cdots \leadsto C_{k+1} \leadsto w $ be distinct cycles with $w$ a sink and $p_1$ a path containing no cycles from $sC_1$ to $sC_2$, also $p_2$ a path (containing no cycles) from $sC_2$ to $sC_3$, $\cdots , p_k$ from $sC_k$ to $w$, $p_{k+1}$ from $sC_{k+1}$ to $w$, $\cdots$, $p_{k+m}$ from $sC_{k+m}$ to $sC_{k+m-1}$.
The elements in $W^n$ of the form $$C_1^{n_1}p_1C_2^{n_2}p_2 \cdots C_k^{n_k}p_kw^{n_0}p_{k+1}^*(C_{k+1}^*)^{n_{k+1}} \cdots p_{k+m}^* (C_{k+m}^*)^{n_{k+m}}$$  $$=C_1^{n_1}p_1C_2^{n_2}p_2 \cdots C_k^{n_k}p_kp_{k+1}^*(C_{k+1}^*)^{n_{k+1}} \cdots p_{k+m}^* (C_{k+m}^*)^{n_{k+m}}$$ where $n_0+n_1+\cdots +n_{k+m}=n-(k+m)$.
 These are distinct elements of the  basis $\mathcal{B}$ and there are $\binom{n}{k+m}$ of them for $n$ large (this counting problem is equivalent to counting the number of ways of placing $n-(k+m)$ identical coins into $k+m+1$ distinct pockets). Hence $dim (W^n)$ has a lower bound which is a polynomial of degree $k+m$ in n. Thus  $GKdim L_{\mathbb{F}}(\Gamma) \geq k+m$. If $ht(\Gamma)=2k$  then we can find distinct cycles $C_1 \leadsto C_2 \leadsto \cdots \leadsto C_k \leadsto w$ where $w$ is a sink. Setting  $C_{k+1}:=C_k$,  $C_{k+2}:=C_{k-1}$, $\cdots C_{2k} =C_1$, we get $GKdim L_{\mathbb{F}}(\Gamma) \geq 2k=ht(\Gamma)$.               
 \medskip
 \medskip
 
To show that $ht(\Gamma) \geq GKdim L_{\mathbb{F}}(\Gamma) $, we note that an arbitrary element of $W^n$ for $n$ large is in the span of elements of the form: $$pq^*=p_0C_1^{n_1}p_1C_2^{n_2} \cdots p_{k-1}C_k^{n_k}p_{k}p_{k+1}^*(C_{k+1}^*)^{n_{k+1}} p_{k+2}^* \cdots (C_{k+m}^*)^{n_{k+m}}p_{k+m+1}^*$$ 
where $n_1+n_2+ \cdots + n_{k+m} \leq n$ and $p_0, p_1, \cdots p_{k+m+1}$ do not contain any cycles.

\medskip
\medskip

For the chains of the distinct cycles $C_1 \leadsto C_2 \leadsto \cdots \leadsto C_k$ and $ C_{k+m} \leadsto \cdots \leadsto C_{k+1}$ there are finitely many choices for each $p_i$ where $0\leq i \leq k+m+1$, the number of these choices depends on $k+m$, but not on $n$. Fixing $p_0, p_1, \cdots , p_{k+m+1}$, the number of possibilities for $n_1, n_2, \cdots n_{k+m}$ is bounded by a polynomial in $n$ of degree $k+m$ (counting the number of ways of placing at most $n$ identical coins into $k+m$ distinct pockets). Since $k+m \leq ht(\Gamma)$ we have a polynomial upper bound which is the sum of the polynomials of degree $\leq ht(\Gamma)$ corresponding to the choice of $p_0, p_1,\cdots, p_{k+m+1}$. (The bound on the number of these polynomials does not depend on $n$.) Therefore $GKdim L_{\mathbb{F}}(\Gamma) = ht(\Gamma)$.
\end{proof}

Some early results on Leavitt path algebras of finite digraphs are easy consequences of Theorem \ref{height}.  $ GKdim L_{\mathbb{F}}(\Gamma)=0=ht(\Gamma)$ if and only if $\Gamma$ is acyclic. Hence $L_{\mathbb{F}}(\Gamma)$ is finite dimensional if and only if $\Gamma$ is acyclic \cite{aam08}, since the Gelfand-Kirillov dimension of any algebra $A$ is 0 if and only if $ dim^{\mathbb{F}}(A)$ is finite.

\medskip
\medskip

$GKdim L_{\mathbb{F}}(\Gamma)=1 $ if and only if $ \Gamma$ has at least one cycle but cycles have no exits, that is, $ht(\Gamma)=1$. In this case $L_{\mathbb{F}}(\Gamma)$ is a direct sum of matrix algebras over $\mathbb{F} [x, x^{-1}] \> \>$ or  $\mathbb{F}$ \cite{aam08}. (This is the special case of Proposition \ref{aas} above when the coefficients are a field and $\Gamma$ is finite.) Hence representations of  $L_{\mathbb{F}}(\Gamma)$ with $GKdim L_{\mathbb{F}}(\Gamma)\leq 1$ are well understood.

\begin{examples} \label{ornek}
The graph $C^*$-algebras of the following digraphs (which are the completions of Leavitt path algebras with complex coefficients) are quantum disks, quantum spheres and quantum real projective spaces \cite{hs02}. The graph $C^*$-algebra of $qD^2$, the quantum 2-disk is also the Toeplitz algebra, the Leavitt path algebra of this digraph is isomorphic to the Jacobson algebra $\mathbb{F} \langle x,y \rangle / (1-xy)$.

\medskip
\medskip 
  $$ qD^{2n} : \qquad \xymatrix{ {\bullet}_1 \ar@(ul,ur)
 \ar@{->}[r]   &{\bullet}_2 \ar@(ul,ur)
 \ar@{->}[r]    & {\bullet}_3 \ar@(ul,ur)\ar@{->}[r] &\cdots \ar@{->}[r] & \bullet_n \ar@(ul,ur) \ar@{->}[r] 
 &\bullet }$$
 
$$qS^{2n-1} : \qquad \xymatrix{ {\bullet}_1 \ar@(ul,ur) \ar@{->}[r]   &{\bullet}_2 \ar@(ul,ur)  \ar@{->}[r]    & {\bullet}_3 \ar@(ul,ur)\ar@{->}[r] &\cdots \ar@{->}[r] & \bullet_n \ar@(ul,ur) }\qquad \qquad $$
 
 $$qS^{2n} : \qquad \xymatrix{ {\bullet}_1 \ar@(ul,ur)
 \ar@{->}[r]   &{\bullet}_2 \ar@(ul,ur)
 \ar@{->}[r]    & {\bullet}_3 \ar@(ul,ur)\ar@{->}[r] &\cdots \ar@{->}[r] & \bullet_n \ar@(ul,ur) \ar@{->}[r] \ar@{->}[dr] &\bullet \\
               &&& & & \bullet }$$
               
                  $$q\mathbb{R}P^{2n} : \qquad \xymatrix{ {\bullet}_1 \ar@(ul,ur)
 \ar@{->}[r]   &{\bullet}_2 \ar@(ul,ur)
 \ar@{->}[r]    & {\bullet}_3 \ar@(ul,ur)\ar@{->}[r] &\cdots \ar@{->}[r] & \bullet_n \ar@(ul,ur) \ar@{->}[r]  \ar@/^/[r]  &\bullet }\> \> \>\> $$

\medskip
\medskip
\noindent
 Note that all these digraphs are completely reduced and $ GKdim L_{\mathbb{F}}(\Gamma)=ht(\Gamma)$ is the dimension of the quantum space for these digraphs.
             \end{examples}

\begin{theorem} \label{k-height}
If $\Gamma$ is a finite digraph, $\mathbb{F}$ is a field and ${\bf k}$ is a commutative $\mathbb{F}$-algebra with 1 then $L_{{\bf k}}(\Gamma)$ is also an $\mathbb{F}$-algebra and

$$ GKdim L_{{\bf k}}(\Gamma)= GKdim L_{\mathbb{F}}(\Gamma) + GKdim ({\bf k}). $$
\end{theorem}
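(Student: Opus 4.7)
The plan is to identify $L_{\bf k}(\Gamma)$ with the $\mathbb{F}$-algebra tensor product $L_{\mathbb{F}}(\Gamma) \otimes_{\mathbb{F}} {\bf k}$, and then invoke the classical tensor-product formula for Gelfand--Kirillov dimension when one factor is commutative.

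First I would establish the $\mathbb{F}$-algebra isomorphism $L_{\bf k}(\Gamma) \cong L_{\mathbb{F}}(\Gamma) \otimes_{\mathbb{F}} {\bf k}$. The defining relations (V), (E), (CK1), (CK2) have integer coefficients and so are preserved under any base change. Explicitly, $L_{\mathbb{F}}(\Gamma) \otimes_{\mathbb{F}} {\bf k}$ inherits a ${\bf k}$-algebra structure from its second tensor factor, and the elements $v \otimes 1$, $e \otimes 1$, $e^* \otimes 1$ satisfy the relations defining $L_{\bf k}(\Gamma)$; the universal property then produces a ${\bf k}$-algebra homomorphism $\varphi \colon L_{\bf k}(\Gamma) \to L_{\mathbb{F}}(\Gamma) \otimes_{\mathbb{F}} {\bf k}$. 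In the reverse direction, the natural $\mathbb{F}$-algebra map $\psi_1 \colon L_{\mathbb{F}}(\Gamma) \to L_{\bf k}(\Gamma)$ together with ${\bf k} \hookrightarrow L_{\bf k}(\Gamma)$ (whose images commute since ${\bf k}$ is central in $L_{\bf k}(\Gamma)$) assembles into an $\mathbb{F}$-algebra map $\psi \colon L_{\mathbb{F}}(\Gamma) \otimes_{\mathbb{F}} {\bf k} \to L_{\bf k}(\Gamma)$, and $\varphi$, $\psi$ are mutually inverse on the generating set.

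Next I would invoke the classical result: for any $\mathbb{F}$-algebra $A$ and commutative $\mathbb{F}$-algebra $B$,
$$GKdim(A \otimes_{\mathbb{F}} B) = GKdim(A) + GKdim(B),$$
a standard theorem (see Krause--Lenagan). Taking $A = L_{\mathbb{F}}(\Gamma)$ and $B = {\bf k}$ gives the asserted equality whenever the quantities are finite. If the cycles of $\Gamma$ intersect, the paragraph preceding Theorem \ref{height} already shows $GKdim\, L_{\mathbb{F}}(\Gamma) = \infty$; the same free-subalgebra argument applied inside $L_{\bf k}(\Gamma)$ (using the embedding $L_{\mathbb{F}}(\Gamma) \hookrightarrow L_{\bf k}(\Gamma)$ induced by $\mathbb{F} \hookrightarrow {\bf k}$) gives $GKdim\, L_{\bf k}(\Gamma) = \infty$, so both sides agree.

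The main obstacle I expect is the bookkeeping when ${\bf k}$ is not finitely generated over $\mathbb{F}$: then $L_{\bf k}(\Gamma)$ is not affine and $GKdim$ must be read as the supremum over finitely generated subalgebras in the sense of \cite{Bell}. I would handle this by observing that any finitely generated $\mathbb{F}$-subalgebra of $L_{\mathbb{F}}(\Gamma) \otimes_{\mathbb{F}} {\bf k}$ is contained in $L_{\mathbb{F}}(\Gamma) \otimes_{\mathbb{F}} {\bf k}_0$ for some affine ${\bf k}_0 \subseteq {\bf k}$, applying the tensor-product formula on each such piece (where Krause--Lenagan applies directly), and then passing to the supremum as ${\bf k}_0$ exhausts ${\bf k}$, using that $GKdim({\bf k}) = \sup_{{\bf k}_0} GKdim({\bf k}_0)$ by definition.
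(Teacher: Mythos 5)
Your proposal is correct and rests on the same structural idea as the paper's proof --- the identification $L_{\bf k}(\Gamma)\cong L_{\mathbb{F}}(\Gamma)\otimes_{\mathbb{F}}{\bf k}$ followed by additivity of Gelfand--Kirillov dimension over the tensor product --- but it obtains the additivity differently. You black-box it as the Krause--Lenagan theorem for a tensor product with a commutative factor, whereas the paper proves it by hand in this situation: it takes the generating subspace $U\otimes W$, with $W$ the subspace from the proof of Theorem \ref{height}, notes $\dim^{\mathbb{F}}(U\otimes W)^n=(\dim^{\mathbb{F}}U^n)(\dim^{\mathbb{F}}W^n)$, and uses the fact that $\log(\dim W^n)/\log n$ converges (to $ht(\Gamma)$, a genuine limit because of the matching polynomial upper and lower bounds in Theorem \ref{height}), which is precisely the condition under which the $\limsup$ of the sum equals the sum of the $\limsup$s. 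Since equality in $GKdim(A\otimes B)=GKdim(A)+GKdim(B)$ fails in general (Warfield's examples), some such hypothesis is genuinely needed, and both your route (commutativity of ${\bf k}$) and the paper's (the true-limit property of $W$) supply one; what your route buys is independence from the explicit basis $\mathcal{B}$, at the cost of importing an external theorem, while the paper's computation is self-contained given Theorem \ref{basis}. Your handling of non-affine ${\bf k}$ via exhaustion by affine subalgebras ${\bf k}_0$ is actually more careful than the paper, which simply picks ``a finite dimensional generating $\mathbb{F}$-subspace $U$ of ${\bf k}$.'' One small omission on your side: you dispose of the infinite case only when the cycles of $\Gamma$ intersect; you should also observe that if $GKdim({\bf k})=\infty$ then $GKdim\,L_{\bf k}(\Gamma)=\infty$ because ${\bf k}\cdot 1$ is a subalgebra of $L_{\bf k}(\Gamma)$ --- this is exactly how the paper phrases the infinite case, using that both $L_{\mathbb{F}}(\Gamma)$ and ${\bf k}$ embed as subalgebras.
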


\begin{proof}
If $GKdim L_{\mathbb{F}}(\Gamma)$ or $GKdim ({\bf k})$ is infinite then $GKdim L_{\bf k}(\Gamma)$ is also infinite since the former are subalgebras of the latter. When both $GKdim L_{\mathbb{F}}(\Gamma)$ and $GKdim ({\bf k})$ are  finite, let $U$ be a finite dimensional generating $\mathbb{F}$-subspace of ${\bf k}$ containing 1 and let $W$ be the span of all the cycles in $\Gamma$, their duals, all paths not containing any cycles and their duals as in the proof of Theorem \ref{height}. We will use the generating subspace $U\otimes W$ of ${\bf k} \otimes_{\mathbb{F}} L_{\mathbb{F}}(\Gamma) \cong L_{{\bf k}}(\Gamma)$ which contains $1\otimes 1=1 $, to compute $GKdim L_{{\bf k}}(\Gamma)$. Now $dim^{\mathbb{F}} (U\otimes W)^n= (dim^{\mathbb{F}} U^n)( dim^{\mathbb{F}} W^n)$. Also $\limsup_{n \to \infty} \frac{log (dim(U^n))}{ log (n)}= GKdim ({\bf k})$ and, as shown in the proof of Theorem \ref{height}, $\lim_{n \to \infty} \frac{log (dim(W^n))}{ log (n)}=ht(\Gamma)$ yielding that $ GKdim L_{{\bf k}}(\Gamma)= ht(\Gamma) + GKdim ({\bf k}) =GKdim L_{\mathbb{F}}(\Gamma)+ GKdim ({\bf k})$.
\end{proof}

\medskip

\begin{remark} If $\mathbb{F}$ is a field, ${\bf k}$ is a commutative $\mathbb{F}$-algebra with 1 and $\Gamma$ is a finite digraph whose cycles are pairwise disjoint then
 $GKdim \> {\bf k}\Gamma \>= \lceil ht(\Gamma)/2 \rceil+ GKdim ({\bf k})$. The proof is similar to those of Theorem \ref{height} and Theorem \ref{k-height} but considerably easier since there are no dual paths and the set of paths is already a basis for the path algebra.
\end{remark}

If $A$ is a finitely generated algebra over a commutative domain {\bf k} then the Gelfand–Kirillov dimension of $A$ with respect to {\bf k} is defined to be

$$GKdim_{\bf k}(A) = \limsup_{n \to \infty} \frac{log \big(rank^{\bf k} (W^n)\big)}{log(n)}$$
where $W$ is a finitely generated ${\bf k}$-submodule of $A$ generating $A$ with $1\in W$, and $W^n$ is the ${\bf k}$-span of  $n$-fold products of elements from $W$ \cite[(E1.2.1)]{Bell}.

\begin{theorem}

If ${\bf k}$ is an integral domain and $\Gamma$ is a finite digraph whose cycles are pairwise disjoint then $GKdim_{\bf k}L_{\bf k}(\Gamma) =ht(\Gamma)$.
\end{theorem}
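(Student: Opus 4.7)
The plan is to reduce the statement to Theorem \ref{height} by base change to the fraction field $K$ of the integral domain ${\bf k}$. First I would observe that Theorem \ref{basis} exhibits $L_{\bf k}(\Gamma)$ as a free ${\bf k}$-module on the combinatorially defined basis $\mathcal{B}$, and that exactly the same set $\mathcal{B}$ is a free $K$-basis of $L_K(\Gamma)$. Since multiplication of elements of $\mathcal{B}$ is dictated by paths and cycles in $\Gamma$ and is independent of the scalar ring, the natural map
$$K \otimes_{\bf k} L_{\bf k}(\Gamma) \> \longrightarrow \> L_K(\Gamma), \qquad \lambda \otimes b \mapsto \lambda b \text{ for } b \in \mathcal{B},$$
is a $K$-algebra isomorphism.

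Next, let $W \subset L_{\bf k}(\Gamma)$ be the finitely generated ${\bf k}$-submodule spanned by the vertices, all cycles in $\Gamma$ together with their duals, and all cycle-free paths together with their duals, exactly as in the proof of Theorem \ref{height}. Then $1=\sum_{v \in V} v \in W$, and every element of $\mathcal{B}$ lies in some $W^n$, so $W$ generates $L_{\bf k}(\Gamma)$ as a ${\bf k}$-algebra. Tensoring with $K$ produces a finite-dimensional generating $K$-subspace $K \otimes_{\bf k} W$ of $L_K(\Gamma)$ containing $1$, and $(K \otimes_{\bf k} W)^n = K \otimes_{\bf k} W^n$ for every $n$.

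Since $W^n$ is a ${\bf k}$-submodule of the free ${\bf k}$-module $L_{\bf k}(\Gamma)$, it is torsion-free; therefore
$$rank^{\bf k}(W^n) \> = \> dim_K \bigl(K \otimes_{\bf k} W^n \bigr) \> = \> dim_K \bigl((K \otimes_{\bf k} W)^n \bigr).$$
Taking $\limsup_{n \to \infty} log(\cdot) / log(n)$ on both sides and applying Theorem \ref{height} with coefficient field $K$ gives $GKdim_{\bf k} L_{\bf k}(\Gamma) = GKdim_K L_K(\Gamma) = ht(\Gamma)$.

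The only delicate point is the identification $K \otimes_{\bf k} L_{\bf k}(\Gamma) \cong L_K(\Gamma)$, and this is transparent because the structure constants of $\mathcal{B}$ established in Theorem \ref{basis} do not depend on ${\bf k}$. An alternative that avoids base change is to repeat the counting argument of Theorem \ref{height} verbatim: the lower bound exhibits polynomially-many distinct elements of $\mathcal{B}$ inside $W^n$, which are ${\bf k}$-linearly independent because $\mathcal{B}$ is a free ${\bf k}$-basis, while the upper bound confines $W^n$ to the ${\bf k}$-span of polynomially-many elements of $\mathcal{B}$ of degree at most $ht(\Gamma)$, bounding its rank above by the same polynomial.
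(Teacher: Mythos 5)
Your proposal is correct and follows essentially the same route as the paper: pass to the fraction field, identify $rank^{\bf k}(W^n)$ with $\dim_K\bigl((K\otimes_{\bf k} W)^n\bigr)$, and invoke Theorem \ref{height} over $K$. The paper's proof is just a terser version of this (citing Bell--Zhang for the rank identity), so your extra care with the isomorphism $K\otimes_{\bf k} L_{\bf k}(\Gamma)\cong L_K(\Gamma)$ via the basis $\mathcal{B}$ only makes the argument more complete.
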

\begin{proof}
$GKdim_{\bf k}L_{\bf k}(\Gamma) =GKdim (\mathbb{F}\otimes A)= ht(\Gamma)$ where 
$\mathbb{F}$ is the field of fractions of {\bf k}. The first equality follows from the definition of $GKdim_{\bf k}$ since $rank^{\bf k} (W^n)= dim^{\mathbb{F}}(\mathbb{F}\otimes W)^n$ as in \cite[Lemma 3.1(i)]{Bell}. The second equality is from Theorem \ref{height}.
\end{proof}

\medskip
\medskip

\noindent
{\bf Acknowledgement}\\
Both authors were partially supported by TUBITAK grant 122F414.

\noindent
Ayten KO\c{C}\\
Department of Mathematics\\
Gebze Technical University\\
Gebze, T\"{U}RKİYE\\
E-mail: aytenkoc@gtu.edu.tr

\medskip
\medskip
\noindent
Murad \"{O}ZAYDIN\\
Department of Mathematics\\
University of Oklahoma\\
Norman, OK, USA\\
E-mail: mozaydin@ou.edu

\end{document}